\documentclass[11pt,a4paper]{article}

\usepackage[a4paper,margin=24mm]{geometry}
\usepackage{amsmath,amssymb,amsthm,mathtools,bm}
\usepackage{graphicx,booktabs,array,multirow}
\usepackage[numbers,sort&compress]{natbib}
\usepackage[colorlinks=true,allcolors=blue!55!black]{hyperref}
\usepackage[nameinlink,noabbrev]{cleveref}
\usepackage{microtype}
\usepackage{xcolor}
\usepackage{caption}
\usepackage{subcaption}
\usepackage{enumitem}
\usepackage{siunitx}
\usepackage[section]{placeins}
\usepackage{authblk}
\usepackage{fancyhdr}
\usepackage{indentfirst}

\graphicspath{{figures/}}
\setlist{nosep,leftmargin=2em}
\numberwithin{equation}{section}

\crefname{figure}{Fig.}{Figs.}
\Crefname{figure}{Figure}{Figures}
\crefname{equation}{Eq.}{Eqs.}
\Crefname{equation}{Equation}{Equations}
\crefname{theorem}{Theorem}{Theorems}
\Crefname{theorem}{Theorem}{Theorems}
\crefname{proposition}{Proposition}{Propositions}
\Crefname{proposition}{Proposition}{Propositions}
\crefname{lemma}{Lemma}{Lemmas}
\Crefname{lemma}{Lemma}{Lemmas}
\crefname{corollary}{Corollary}{Corollaries}
\Crefname{corollary}{Corollary}{Corollaries}

\newtheorem{theorem}{Theorem}[section]
\newtheorem{proposition}[theorem]{Proposition}
\newtheorem{lemma}[theorem]{Lemma}

\theoremstyle{definition}

\theoremstyle{remark}

\newcommand{\R}{\mathbb R}
\newcommand{\Qraw}{\mathcal Q_{\mathrm{raw}}}
\newcommand{\Gg}{\mathcal G_h}
\newcommand{\Rr}{\mathcal R_h}
\newcommand{\sym}{\operatorname{sym}}
\newcommand{\skw}{\operatorname{skew}}
\newcommand{\tr}{\operatorname{tr}}
\newcommand{\im}{\operatorname{im}}

\newcommand{\norm}[1]{\left\lVert#1\right\rVert}
\newcommand{\abs}[1]{\left\lvert#1\right\rvert}
\newcommand{\XM}{X_M}
\newcommand{\XW}{X_W^{\mathrm{rel}}}

\title{Checkerboard Shells: A Position-Only Thin-Shell Discretization with Scale-Compatible Completion}
\date{}

\author[1]{Junjie Song}
\author[2]{Xuanyu Wu}
\author[3]{Zhifeng Zhang}
\author[1]{Bingtao Hu}
\author[1]{Zhaoxi Hong}
\author[1]{Xiuju Song}
\author[1,2]{Yixiong Feng\thanks{Corresponding author.}}
\author[1]{Jianrong Tan}

\affil[1]{State Key Laboratory of Fluid Power \& Mechatronic Systems, Zhejiang University, Hangzhou, China}
\affil[2]{College of Mechanical Engineering, Guizhou University, Guiyang, China}
\affil[3]{Advanced Technology Institute, Zhejiang University, Hangzhou, China}

\begin{document}
\maketitle

\begin{abstract}
Checkerboard edge-midpoint geometry has been used in computer graphics and discrete differential geometry for discrete isometries, developable-surface design, and curvature modeling. Its defining geometric property is that the four edge midpoints of any spatial quadrilateral form an exactly planar Varignon parallelogram. Consequently, an unambiguous local plane, tangent frame, and normal can be recovered from nodal positions even when the underlying quadrilateral is warped. Building on this structure, we develop a position-only thin-shell discretization that introduces no independent director, rotation, or strain variables.

The connected edge-midpoint surface is taken as the physical discrete midsurface. The first fundamental form is evaluated directly on the planar B faces, while the second fundamental form is constructed at staggered W sites from variations of neighboring B-face normals, so that membrane and bending kinematics share the same Checkerboard geometric carrier. A variational-kernel analysis further shows that second-order consistency on smooth geometry does not preclude lattice-scale blind modes: after quotienting out the raw checkerboard gauge, the B metric misses one physical membrane direction, whereas the symmetric W curvature misses two independent curvature directions. We therefore introduce a quotient-minimal membrane compatibility coordinate $X_M$ and a reference-relative curvature compatibility coordinate $X_W^{\mathrm{rel}}$. The former is normalized by $\beta_M=8\mu$ to the isotropic trace-free shear channel and enters the $O(t)$ membrane sector; the latter uses surface-polar transport to preserve objectivity under finite rotations, with coefficient $\beta_W^0=\mu\gamma_W^0$ obtained by exact integration of an auxiliary-Q1 normal-gradient energy on the reference metric, and enters the $O(t^3)$ bending sector.

Analytically, the formulation admits an explicit midpoint quotient, a complete classification of the flat blind modes, rigid-motion objectivity, reference-state consistency, and an $O(h^2)$ near-isometry approximation result for aligned generalized cylinders. Numerically, the W-centered second form exhibits second-order convergence on smooth surfaces; $X_M$ removes the target membrane defect while leaving standard load-bearing responses essentially unchanged; and the finite-mesh displacement effect of $X_W^{\mathrm{rel}}$ remains sub-percent and decreases under refinement. A simply supported plate, the Scordelis--Lo roof, the MacNeal--Harder M3 shell, and three finite-deformation benchmarks further cover flat bending, membrane--bending coupling on curved shells, thickness sensitivity, nearly $360^\circ$ rotation, nonlinear pinching, and localized ovalization. The resulting discretization unifies the strictly planar midpoint geometry of Checkerboard surfaces, the physical quotient space, and the variational observability required by thin-shell mechanics within a single position-only framework.
\end{abstract}

\noindent\textbf{Keywords:} thin shells; position-only discretization; Checkerboard geometry; discrete fundamental forms; membrane locking; blind modes; compatibility completion

\section{Introduction}\label{sec:intro}

Quadrilateral meshes provide two distinguished parametric directions and are widely used in geometry processing, surface design, and fabrication. A generic spatial quadrilateral, however, is not planar. If its four vertices are treated directly as a local surface patch, tangent planes, normals, and curvatures typically depend on a choice of diagonal triangulation, local fitting, or additional rotational variables. Checkerboard edge-midpoint geometry offers a different route. By Varignon's theorem, the four edge midpoints of any quadrilateral form an exactly planar parallelogram. Thus, even when the raw quadrilateral is warped, the nodal positions alone determine an unambiguous local planar structure. Adjacent midpoint quadrilaterals share edge midpoints and thereby form a connected discrete surface with a natural directional structure.

This representation has developed into a substantial line of work in computer graphics and discrete differential geometry. Peng et al.\ studied the geometric constraints and design properties of Checkerboard patterns \cite{pengEtAl2019}. Jiang et al.\ used quad-mesh-based isometries to represent developable surfaces and applied discrete isometries to shape design, transformation, and fabrication \cite{jiangEtAl2020,jiangEtAl2021}. Ceballos Inza et al.\ further related developable quad meshes to contact element nets \cite{ceballosInzaEtAl2023}. Together with the classical theory of discrete isothermic surfaces \cite{bobenkoPinkall1996}, these studies show that midpoints, diagonal directions, and local coplanarity can support rich discrete-surface geometry. More recently, Dellinger developed discrete first and second fundamental forms, Christoffel duality, and discrete isothermic nets directly from Checkerboard patterns \cite{dellinger2024}. Checkerboard geometry is therefore not an ad hoc device introduced for the shell formulation considered here, but an established position-based surface representation with demonstrated capabilities for discrete isometries, developability, and curvature modeling.

These properties naturally motivate a position-only thin-shell formulation. Kirchhoff--Love/Koiter theory measures membrane deformation through the first fundamental form of the midsurface and bending through the second fundamental form, with the associated energies scaling as $t$ and $t^3$, respectively \cite{koiter1966,naghdi1972,ciarlet2005,sauerDuong2017}. If both geometric quantities can be evaluated robustly from the current nodal positions, finite rotations can in principle be handled without introducing an independent director or rotation parametrization. At the same time, the dominant bending response of a thin shell is close to a midsurface isometry. Any unnecessary membrane constraint is therefore amplified relative to the physical bending stiffness as $t\to0$, which can lead to membrane locking \cite{chapelleBathe2011,stolarskiBelytschko1982,arnoldBrezzi1997,hiemstraEtAl2023}. The simplicity of a position-only discretization is therefore useful only if both its membrane and curvature observations have the correct physical observability.

Existing shell formulations address geometry, continuity, and locking from several directions. MITC, mixed-interpolation, and assumed-strain methods alleviate constraint mismatch by reconstructing or projecting strains \cite{dvorkinBathe1984,batheDvorkin1985,miYu2021,parkStanley1986}; Hellan--Herrmann--Johnson-type methods introduce mixed bending-moment variables \cite{neunteufelSchoberl2019}; and isogeometric shells obtain thin-shell accuracy through highly continuous basis functions together with continuous assumed-strain or hybrid discretizations \cite{hughesEtAl2005,kiendlEtAl2009,casqueroMathews2023,sauerEtAl2024}. Geometry-based shells instead construct bending measures from subdivision surfaces, hinges, discrete shape operators, or midedge states \cite{cirakEtAl2000,grinspunEtAl2003,grinspunEtAl2006,chenEtAl2018,chenEtAl2026}. These approaches are well established, but they commonly require higher continuity, additional edge/director/rotation or mixed variables, or distinct geometric/interpolation carriers for membrane and bending. Such additional structures do not directly exploit the strictly planar local patches and purely positional parametrization already available in a Checkerboard surface.

Checkerboard geometry therefore suggests a different starting point for shell discretization. Varignon midpoint patches remain exactly planar for arbitrary current nodal configurations, so their tangent frames and normals do not depend on diagonal triangulation; neighboring patches are naturally connected through shared midpoints. If both fundamental forms are derived directly from this midpoint surface, membrane and bending can share the same discrete geometric carrier, while finite rotations require updating only the nodal positions rather than maintaining independent director or rotation states. Extending Checkerboard geometry from graphics/DDG to thin-shell mechanics is therefore motivated not merely by broadening an existing geometric representation, but by exploiting its strict planarity, position-only state, and unified local carrier to obtain a more compact shell discretization.

Moving from geometry to mechanics introduces a Checkerboard-specific difficulty. Convergence of geometric quantities on smooth surfaces does not guarantee that the discrete energy has the correct observability at the lattice scale. The raw nodal coordinates contain an alternating gauge that represents the same midpoint surface, while other lattice-scale deformations can genuinely change the midpoint surface yet remain invisible to a discrete metric or curvature observation. The former is representation redundancy; the latter produces zero-energy or anomalously soft physical mechanisms. The two must first be separated in a physical quotient space, after which only the information genuinely missing from the fundamental forms should be supplemented. On the membrane side, this supplementation must be deliberately restrained so that the near-isometric bending motions of thin shells are not converted into artificial membrane stiffness. The central issue is thus the interaction among representation, variational observability, and thin-shell scaling rather than any single geometric approximation in isolation.

Following this perspective, the connected edge-midpoint surface is used as the physical midsurface, strictly planar Varignon B faces carry the local metric, and variations of neighboring B-face normals define the second fundamental form at staggered W sites. Membrane and bending therefore remain tied to the same Checkerboard geometric carrier. The variational kernels of these fundamental-form observations are then analyzed on the midpoint quotient, and the missing information is completed according to the distinct $O(t)$ membrane and $O(t^3)$ bending scales. The resulting finite-configuration model uses nodal positions as its only independent unknowns.

The main contributions are:
\begin{enumerate}
  \item \textbf{A position-only shell geometry with the Varignon midpoint surface as a unified carrier.} The compatible edge-midpoint surface is defined as the physical discrete midsurface. Planar B faces directly carry the first fundamental form, while a W-centered second form built from neighboring B-face normals describes bending without introducing independent normal or rotational variables. A finite-domain closure consistent with the W stencil is also provided. Membrane and bending therefore act on the same Checkerboard surface.
  \item \textbf{Identification of blind modes in the physical quotient space and scale-compatible targeted completion.} Quotient analysis separates the raw checkerboard gauge from genuine physical deformation. The B metric is shown to possess one additional membrane blind direction, while the symmetric W curvature possesses two independent curvature blind directions. $X_M$ restores only the one-dimensional membrane defect, whereas $X_W^{\mathrm{rel}}$ recovers the missing normal-compatibility information and preserves finite-rotation objectivity through surface-polar transport.
  \item \textbf{A finite-configuration Koiter-type position-only energy with competitive accuracy per state.} The two completion terms enter the energy at the physically appropriate $O(t)$ and $O(t^3)$ scales, with coefficients fixed by material shear normalization and auxiliary-Q1 geometric integration. Classical linear-shell benchmarks show that competitive accuracy can be obtained with only $3V$ positional scalars, compared with formulations that introduce independent edge states. Thickness--mesh coupling studies and three complete nonlinear loading paths further validate near-isometric response, large rotations, and strongly nonlinear curved-shell deformation.
\end{enumerate}

Together, these contributions carry the original geometric advantages of Checkerboard surfaces into shell mechanics: a unified physical midsurface supplies both membrane and bending geometry, quotient analysis ensures that the energy acts only on physical deformation, and scale-compatible completion restores the missing mechanical observability while preserving the position-only state.
\section{Checkerboard Representation and Discrete Shell Geometry}\label{sec:geometry}

\subsection{Raw Nodes, Midpoint Surface, and the Physical Quotient Space}\label{sec:quotient}

The free variables of the Checkerboard shell are still the nodal positions of a structured grid, but these raw vertices are not themselves the discrete midsurface to which mechanical meaning is assigned. The geometric measurements are instead taken on the adjacent edge midpoints: the four edge midpoints of each raw quadrilateral form an exactly planar Varignon parallelogram, and neighboring midpoint patches connect through shared midpoint nodes to form a continuous Checkerboard surface. Figure~\ref{fig:geometry}(a) shows the raw quadrilateral, its edge midpoints, and the resulting physical midpoint surface. A direct consequence is that the local tangent plane remains uniquely defined even when the raw quadrilateral is warped, without requiring an auxiliary triangulation or geometric fit.

Before constructing the shell energy, it is necessary to specify which changes of the discrete variables correspond to genuine midsurface deformation. The map from raw vertices to edge midpoints is not one-to-one: an alternating nodal displacement can change the raw mesh while leaving every edge midpoint unchanged, and therefore does not change the physical midsurface. If stiffness kernels were analyzed directly in the raw coordinate space, this representation redundancy would be mixed with genuine zero-energy physical deformation. The purpose of the physical quotient space is precisely to separate the two, so that subsequent kernel analysis concerns only deformations that alter the midpoint surface.

Consider a connected structured quadrilateral grid with raw state
\begin{equation}
 q=\{q_{ij}\in\R^3\}_{i=0,j=0}^{n_x,n_y}.
\end{equation}
The horizontal and vertical edge midpoints are
\begin{equation}\label{eq:midpoints}
 m^x_{i+1/2,j}=\frac{q_{ij}+q_{i+1,j}}2,
 \qquad
 m^y_{i,j+1/2}=\frac{q_{ij}+q_{i,j+1}}2.
\end{equation}
Let the linear midpoint map be denoted by $Pq=(m^x,m^y)$. We regard $Pq$ as the physical discrete surface, while the raw nodal vector $q$ is only a compact parametrization of that surface.

\begin{theorem}[Checkerboard gauge of the midpoint map]\label{thm:gauge}
If the raw graph is connected and bipartite, then
\begin{equation}\label{eq:gauge-kernel}
 \ker P=\left\{g_{ij}=(-1)^{i+j}A:\ A\in\R^3\right\}.
\end{equation}
Hence the physical configuration space satisfies
\begin{equation}\label{eq:quotient}
 \Qraw/\ker P\cong\im P.
\end{equation}
\end{theorem}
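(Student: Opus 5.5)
The argument has two parts: the kernel of $P$ is found by propagating the zero-midpoint condition along edges, and the quotient statement then follows from the first isomorphism theorem.

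\emph{Inclusion $\supseteq$.} Let $g_{ij}=(-1)^{i+j}A$. The two endpoints of any horizontal edge $(i,j)$--$(i+1,j)$ have indices whose sums differ by one, so their signs are opposite. Hence $m^x_{i+1/2,j}=\tfrac12\bigl((-1)^{i+j}A+(-1)^{i+j+1}A\bigr)=0$. The same computation handles every vertical edge, since again the index sums of the endpoints differ by one. So $g\in\ker P$.

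\emph{Inclusion $\subseteq$.} Let $Pq=0$. Then every edge $(u,v)$ of the raw graph satisfies $q_u+q_v=0$, that is, $q_v=-q_u$. Fix the root vertex $(0,0)$ and set $A:=q_{00}$. Take any vertex $(i,j)$. Because the graph is connected, there is a path from $(0,0)$ to $(i,j)$. Walking along this path, the vertex value changes sign at each step. Hence $q_{ij}=(-1)^{\ell}A$, where $\ell$ is the length of the path.

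\emph{Well-definedness of the sign.} We must check that $(-1)^\ell$ does not depend on which path is chosen. This is where bipartiteness is used. On the structured grid the bipartition is given by the parity of $i+j$, because every edge changes $i+j$ by $\pm1$. It follows that any path from $(0,0)$ to $(i,j)$ has $\ell\equiv i+j \pmod 2$. Consequently $q_{ij}=(-1)^{i+j}A$ for every vertex, and this defines a single consistent field.

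If the graph were not bipartite, an odd cycle would force $A=-A$, and the kernel would be trivial. The hypothesis therefore matters exactly at this well-definedness step. It is the only point that requires care, and it is immediate on a structured grid.

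\emph{Quotient statement.} $P:\Qraw\to(\R^3)^{E}$ is linear, where $E$ is the edge set. Its restriction onto the image, $P:\Qraw\to\im P$, is a surjective linear map. The first isomorphism theorem then gives $\Qraw/\ker P\cong\im P$, with the isomorphism $[q]\mapsto Pq$.

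As a consistency check, let $V$ be the number of vertices. Since $\dim\ker P=3$, rank–nullity gives $\dim\im P=3V-3$. This shows that the physical configuration space has exactly three fewer degrees of freedom than the raw state.
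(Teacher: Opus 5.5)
Your proof is correct and follows essentially the same route as the paper: propagate $q_v=-q_u$ along paths from a fixed root, use bipartiteness (on the structured grid, the parity of $i+j$) to make the sign path-independent, and obtain the quotient from the first isomorphism theorem. The only difference is presentation. The paper proves the scalar statement $\ker S=\operatorname{span}\{p\}$ for an arbitrary connected bipartite graph with edge-sum operator $S$, writes $P=\tfrac12 S\otimes I_3$, and applies the scalar result componentwise, which is the same argument.
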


Equation~\eqref{eq:gauge-kernel} shows that the entire grid contains exactly one three-dimensional black--white alternating degree of freedom that leaves all physical midpoints fixed, and that this redundancy does not grow under mesh refinement. Figure~\ref{fig:geometry}(b) shows two raw representatives related by this gauge: their raw vertices differ, but their midpoint surfaces are identical. A graph-theoretic proof and the closure conditions characterizing the compatible midpoint image are given in Appendix~\ref{app:quotient}.

This gauge is a redundancy of the parametrization and should not be constrained by material stiffness or a penalty. It can instead be removed by a quotient basis, projection, or an equivalent gauge slice. Consequently, $P\delta q\neq0$ becomes the basic criterion for physical deformation. Only a deformation satisfying this criterion and simultaneously belonging to the linearized kernel of a membrane or curvature observation constitutes a physical defect that the shell energy must address.

If a conventional nodal mesh is required for output, a smooth representative can be selected from the equivalence class $q+\ker P$. This reconstruction is used only for visualization and data exchange; it does not enter the energy, internal force, or tangent computation. Once the physical state has been identified, metric and curvature are constructed on the same midpoint surface: the former measures lengths and angles within a local patch, while the latter measures relative rotations between neighboring local planes.

\subsection{Varignon B-Face Geometry and the Discrete First Fundamental Form}\label{sec:first-form-geometry}

The first fundamental form measures local lengths and angles of the midsurface and therefore requires a well-defined local tangent plane. Such a plane is not unique for a generic spatial quadrilateral, but it is exact for every Varignon midpoint patch used here. This inscribed planar patch is denoted a B face, as shown in Fig.~\ref{fig:geometry}(c). Its planarity is not a small-deformation approximation; it is a geometric identity that holds for arbitrary current nodal positions.

Let the raw parameter spacings be $h_x,h_y$, and define the two Checkerboard diagonal directions as
\begin{equation}
 d_1=(h_x,h_y),\qquad d_2=(-h_x,h_y),\qquad
 \ell=\sqrt{h_x^2+h_y^2}.
\end{equation}
For a raw cell $q_{00},q_{10},q_{11},q_{01}$, define the normalized B tangents
\begin{equation}\label{eq:b-tangents}
 t_1^B=\frac{q_{11}-q_{00}}{\ell},\qquad
 t_2^B=\frac{q_{01}-q_{10}}{\ell},\qquad
 T_B=[t_1^B,t_2^B],
\end{equation}
and the unit normal
\begin{equation}\label{eq:b-normal}
 N_B=\frac{t_1^B\times t_2^B}{\norm{t_1^B\times t_2^B}}.
\end{equation}
As long as $t_1^B\times t_2^B\neq0$, the tangent frame and normal of the B face are uniquely determined by the current positions. Its Gram matrix
\begin{equation}\label{eq:first-form}
 a_B=T_B^TT_B
\end{equation}
is the discrete first fundamental form: the diagonal entries measure squared lengths along the two B directions, while the off-diagonal entry measures their relative angle. Thus $a_B$ directly records in-plane stretch and shear of the physical B patch without introducing a director or an independent strain variable.

After aligning directions and scales, $a_B$ is algebraically identical to the local first-form observation in Dellinger's Checkerboard geometry \cite{dellinger2024}. The B-face metric is therefore locally consistent with the established Checkerboard metric construction. For a sufficiently smooth regular immersion, a centered chord expansion yields
\begin{equation}\label{eq:first-form-consistency}
 a_B=a+O(h^2),
\end{equation}
so the continuous first fundamental form is approximated to second order.

Equation~\eqref{eq:first-form-consistency} controls the local geometric error for smooth fields, but it does not exclude unobservable lattice-scale deformations. The completeness of the membrane energy must be examined only after both fundamental forms have been defined. For bending, the remaining task is to measure relative rotations of neighboring local planes on the same midpoint surface.

\subsection{W-Centered Second Fundamental Form and Finite-Domain Closure}\label{sec:second-form-geometry}

Whereas the first fundamental form describes lengths and angles within a local patch, bending is associated with the relative rotation of neighboring tangent planes. A discrete second fundamental form must therefore specify where normals are placed, how neighboring normals are compared, and whether the resulting curvature observation remains tied to the chosen physical midsurface.

Checkerboard and discrete-isothermic-surface research has developed Gauss-map, discrete-curvature, and second-form constructions \cite{bobenkoPinkall1996,pengEtAl2019,dellinger2024}. Dellinger's discrete second fundamental form, for example, uses a vertex-/coordinate-cross-centered Gauss image as the normal carrier and differentiates it along Checkerboard directions \cite{dellinger2024}. That construction is well suited to discrete isothermic nets, Christoffel duality, and curvature geometry, but its direct use in the present shell discretization would create three structural mismatches. First, the membrane metric is already attached to strictly planar Varignon B faces, whereas a vertex-centered Gauss image constitutes a different local geometric carrier; membrane and bending would no longer be derived directly from the same discrete midsurface. Second, the raw-to-midpoint representation has an exact gauge, so a shell energy must be analyzed in the physical quotient space to determine the variational kernel of its curvature observation; existing geometric constructions were not designed for this mechanical observability question. Third, finite-configuration shell mechanics requires the current/reference comparison of curvature quantities to remain objective under rigid rotations and to enter consistently at the $O(t^3)$ bending scale, requirements that go beyond the definition of a geometric curvature measure itself.

We therefore introduce a W-centered second form that acts directly on Varignon B-face normals. The first and second fundamental forms, $a_B$ and $b_W$, then share the same physical Checkerboard surface: the former measures the intrinsic metric of an individual planar B face, while the latter measures relative rotations of neighboring B faces. The construction adds no independent normal or director, requires no auxiliary plane fit in the generally non-coplanar W neighborhood, and introduces no second curvature carrier. More importantly, metric, curvature, gauge, and the subsequent compatibility completion can all be analyzed in the same position-only state space. For a shell whose physical midsurface is the B-face midpoint surface, this unified carrier is the central advantage of the W-centered construction over a vertex-centered Gauss-image route.

Because each B face already provides a unique $N_B$, the relative rotation of adjacent B planes can be measured directly through differences of these normals. The interstitial location among four neighboring B faces is therefore the natural curvature-sampling point; we denote it a W site. Figure~\ref{fig:geometry}(d) shows the connected four-B patch $B_{NE},B_{NW},B_{SW},B_{SE}$ surrounding a W site. The W site serves only as a staggered location at which neighboring B-face rotations are sampled; it does not introduce an independent geometric face.

The natural midpoint polygon around a W site is generally not planar, so no additional plane is fitted there. The W-centered tangents are obtained directly by crossed averages of the diagonal B tangents:
\begin{equation}\label{eq:w-tangents}
 t_1^W=\frac12(t_1^{B,NE}+t_1^{B,SW}),\qquad
 t_2^W=\frac12(t_2^{B,NW}+t_2^{B,SE}),
 \qquad T_W=[t_1^W,t_2^W].
\end{equation}
For tensor contraction in the bending sector, we also use the local W metric
\begin{equation}\label{eq:w-metric}
 a_W=T_W^TT_W.
\end{equation}
This auxiliary metric is used only to express and contract W-centered curvature tensors; it is not introduced as an additional membrane-strain observation.

Relative rotations of neighboring B planes are measured by centered normal differences:
\begin{equation}\label{eq:normal-diff}
 D_1^hN=\frac{N_{NE}-N_{SW}}{\ell},\qquad
 D_2^hN=\frac{N_{NW}-N_{SE}}{\ell}.
\end{equation}
The unsymmetrized pairing and the symmetric tensor used in the shell bending energy are
\begin{equation}\label{eq:second-form}
 (\widetilde b_W)_{\alpha\beta}=-D_\alpha^hN\cdot t_\beta^W,
 \qquad
 b_W=\sym\widetilde b_W.
\end{equation}
The skew part $\skw\widetilde b_W$ does not enter the shell energy. Because all of these quantities are constructed from B diagonal differences and B-face normals, they are exactly invariant under the finite-amplitude gauge in \cref{eq:gauge-kernel}.

Special boundary treatment is required only when the W-centered stencil of the second fundamental form is truncated. The first fundamental form $a_B$ is evaluated directly on every existing B cell and requires no analogous extrapolation. For a boundary W site, tangent averages, B-normal differences, and $b_W$ are reconstructed with one-dimensional second-order moment formulas: the boundary-value weights are $(15/8,-5/4,3/8)$ and the boundary first-derivative weights are $(-2,3,-1)/h$; at corners, the one-dimensional rule is applied by tensor product in the two parametric directions. W quadrature uses the dual fractions $1,1/2,1/4$ for interior, edge, and corner sites, respectively. These reconstructions complete the geometric stencil only and remain independent of Dirichlet, symmetry, or natural mechanical boundary conditions.

\begin{proposition}[Smooth consistency]\label{prop:consistency}
Let $r$ be a sufficiently smooth regular immersion, let the structured parameter grids belong to a fixed shape-regular family, and assume the B/W frames remain uniformly nondegenerate. Then, at the corresponding B/W sampling sites,
\begin{equation}\label{eq:smooth-consistency}
 a_B=a+O(h^2),\qquad b_W=b+O(h^2),
\end{equation}
where $a$ and $b$ denote the continuous first and second fundamental-form matrices expressed in the corresponding normalized Checkerboard diagonal basis.
The $O(h^2)$ error in $a_B$ follows from the B-chord geometry. Maintaining the same local order for $b_W$ at edges and corners additionally relies on the one-sided value/derivative reconstruction and its tensor-product corner extension.
\end{proposition}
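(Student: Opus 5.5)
The plan is to sample the smooth immersion on the grid, $q_{ij}=r(ih_x,jh_y)$, and Taylor-expand every discrete quantity about the relevant sampling site. The raw grid is fine here: the gauge of \cref{thm:gauge} does not affect $a_B$ or $b_W$, so no quotient representative needs to be chosen. Write $\partial_1,\partial_2$ for the derivatives along the unit diagonal directions $d_1/\ell$ and $d_2/\ell$. The continuous $a$ and $b$ in the statement are $a_{\alpha\beta}=\partial_\alpha r\cdot\partial_\beta r$ and $b_{\alpha\beta}=-\partial_\alpha n\cdot\partial_\beta r$. Constants depend only on bounds for $r$ and its derivatives up to order four, on the shape-regularity ratio $h_x/h_y$, and on the nondegeneracy bound for the frames.

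\textbf{B metric.} Let $c$ be the parametric centre of a raw cell. The B tangents are symmetric chords through $c$:
\[
q_{11}-q_{00}=r(c+d_1/2)-r(c-d_1/2).
\]
Because the expansion is odd, the even-order terms cancel, giving $t_1^B=\partial_1 r(c)+O(h^2)$ and likewise $t_2^B=\partial_2 r(c)+O(h^2)$. Taking Gram products gives $a_B=a(c)+O(h^2)$. Normalizing the cross product is a smooth map on the nondegenerate set, so the same estimate yields $N_B=n(c)+O(h^2)$. I also expect to need the leading $O(h^2)$ error term explicitly, namely $N_B(c)=n(c)+h^2E(c)+O(h^3)$ with $E$ smooth. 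This should follow because the remainders are themselves smooth functions of $c$: the cubic terms involve $\partial^3 r(c)$ and the remaining terms are $O(h^4)$ by the odd symmetry.

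\textbf{W curvature in the interior.} Let $w$ be a W site. The four B centres are $w\pm d_1/2$ (NE, SW) and $w\pm d_2/2$ (NW, SE), so the crossed averages and the normal differences are both centred about $w$.
\begin{itemize}
\item For the averages, $t_1^W=\tfrac12\bigl(t_1^B(w+d_1/2)+t_1^B(w-d_1/2)\bigr)=\partial_1 r(w)+O(h^2)$, by the averaging rule together with the B estimate.
\item For the differences, substituting the expansion of $N_B$ gives
\[
D_1^hN=\ell^{-1}\bigl[n(w+d_1/2)-n(w-d_1/2)\bigr]+h^2\ell^{-1}\bigl[E(w+d_1/2)-E(w-d_1/2)\bigr]+O(h^2).
\]
The first bracket is a central difference, equal to $\partial_1 n(w)+O(h^2)$. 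The second is $h^2\cdot O(h)/\ell=O(h^2)$ because $E$ is smooth.
\end{itemize}
The smoothness of the error term $E$ is the delicate point. A naive bound $N_B=n+O(h^2)$ divided by $\ell$ would only give $O(h)$. I would therefore prove the expansion of $N_B$ with a smooth coefficient, treating $N_B$ as the composition of a smooth map with the chord difference quotients. Pairing the results and symmetrizing then gives $b_W=b(w)+O(h^2)$, and the skew part vanishes to the same order.

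\textbf{Boundary and corner sites.} This is the main obstacle. I would first check the one-dimensional moment conditions for the stated weights:
\begin{itemize}
\item The value weights $(15/8,-5/4,3/8)$ reproduce $1$, $x$ and $x^2$ at a point half a spacing outside the node triple. Evaluating at $x=-1/2$ with nodes $0,1,2$ gives $15/8-5/4+3/8=1$, then $-5/4+3/4=-1/2$, then $-5/4+12/8=1/4$.
\item The derivative weights $(-2,3,-1)/h$ are exact for quadratics at that same point. Checking $x$ gives $3-2=1$, and checking $x^2$ gives $3-4=-1=2(-1/2)$.
\end{itemize}
Applying these rules to the interior B quantities, which are smooth up to an $O(h^2)$ error with smooth coefficient, gives value errors of $O(h^3)$ plus $O(h^2)$. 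For the derivative the error is $O(h^2)$, because the rule is exact for quadratics and the cubic remainder gives $h^3/h$. The same cancellation argument for the smooth $h^2E$ term keeps its contribution at $O(h^2)$. At corners the tensor product of two rules that are exact on quadratics is exact on the tensor product of quadratics in each variable, so the order is preserved. Uniform nondegeneracy then allows the normalization and the pairing to be carried out without losing order.
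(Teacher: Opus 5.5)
Your proposal is correct and follows the same route as the paper. The paper's proof is a brief sketch: centered chord expansions for the B tangents, smooth dependence of the normalized cross product, centered normal differences at W sites, and moment exactness of the one-sided weights $(15/8,-5/4,3/8)$ and $(-2,3,-1)/h$ with their tensor-product corner extension. You also make explicit a step the paper leaves implicit. Writing $N_B=n+h^2E+O(h^3)$ with a smooth coefficient $E$ is what keeps $D^h_\alpha N$ at $O(h^2)$; a bare $O(h^2)$ bound on $N_B$ would give only $O(h)$ after division by $\ell$.
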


The proposition follows from centered chord expansions, variations of the normalized cross product, and moment exactness of the one-sided reconstruction; technical details are given in Appendix~\ref{app:boundary}. Thus both the membrane metric and the curvature observation are defined on the same midpoint Checkerboard surface and remain second-order consistent for smooth fields. Truncation error, however, says nothing about the observability of lattice-scale modes. Before these geometric quantities can be used as a shell energy, their linearized kernels must be checked for non-rigid deformations that actually move the physical midsurface.

\begin{figure}[t]
 \centering
 \includegraphics[width=\textwidth]{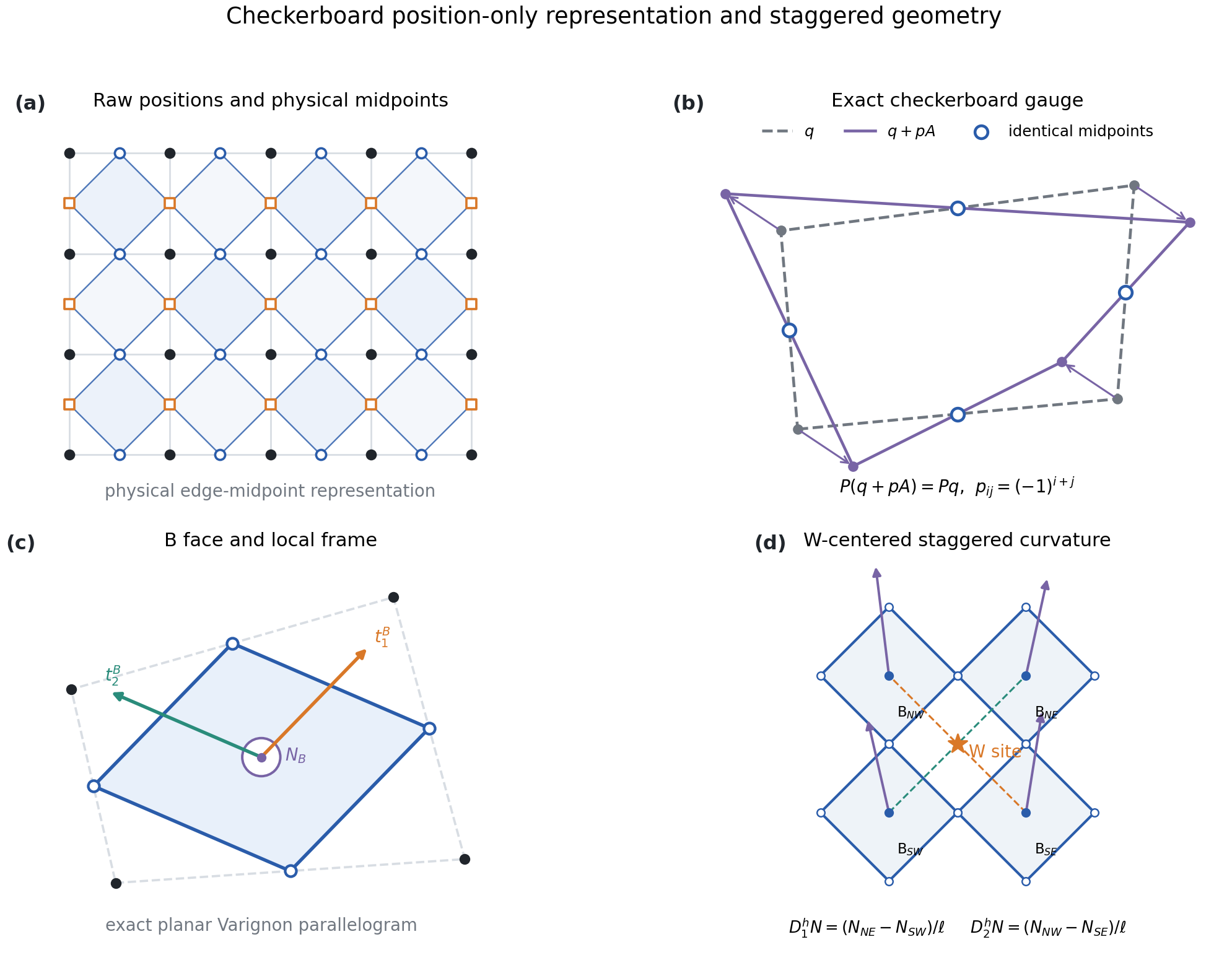}
 \caption{Checkerboard position-only representation and B/W geometry. (a) Raw vertices, edge midpoints, and the inscribed Varignon B faces obtained by connecting the four edge midpoints of each raw quadrilateral; neighboring B faces share midpoint nodes. (b) Two raw representatives related by the exact checkerboard gauge have the same midpoint surface. (c) A single planar B face with its tangent frame and normal. (d) A connected four-B patch surrounding one W site; diagonal centered differences of neighboring B normals define the W-centered curvature.}
 \label{fig:geometry}
\end{figure}
\FloatBarrier
\section{Physical Blind Modes and Variational Observability}\label{sec:blind}

\subsection{Geometric Consistency versus Variational Completeness}

A \emph{physical blind mode} is a genuine physical deformation that is not detected by a discrete geometric observation. In such a mode the physical midpoint surface changes, while the observation used to construct the energy remains unchanged to first order. If $\delta q$ is not a rigid motion and satisfies
\begin{equation}\label{eq:blind-mode-definition}
 P\delta q\neq0,\qquad D\mathcal A_h(q_0)[\delta q]=0,
\end{equation}
then the first condition excludes the representation gauge identified in Section~\ref{sec:quotient}, whereas the second states that this genuine midsurface deformation is invisible to the discrete observation. Since the material energy depends on the configuration only through such observations, a physical direction in this kernel lacks the corresponding quadratic restoring stiffness.

Blind-mode analysis is necessary because smooth consistency and lattice-scale observability are different properties. An $O(h^2)$ truncation estimate is derived from smooth Taylor fields and primarily probes deformations whose wavelength is much larger than the mesh scale. A structured lattice, however, also supports alternating modes with wavelength comparable to $h$. If a physical mode of this type lies in the kernel of the membrane or curvature observation, the discrete energy acquires an additional zero-energy or anomalously soft direction. Conversely, suppressing such underconstraint by adding overly broad membrane observations can shrink the near-isometric space of the continuous thin shell and induce locking. Kernel analysis therefore serves two purposes: it determines the dimension of the missing information and it identifies how narrowly a subsequent completion should act.

The analysis linearizes $a_B$ and $b_W$ separately about a flat reference state, removes the continuous rigid motions and the raw gauge of Section~\ref{sec:quotient}, solves the remaining kernel through shared-node lattice compatibility, and finally uses $P\delta q$ to determine whether each surviving direction actually moves the physical midsurface. For a generic discrete observation $\mathcal A_h(q)$ with reference linearization $J_h=D\mathcal A_h(q_0)$, the physical blind space after quotienting out the rigid-motion space $\Rr$ and the gauge $\Gg$ is
\begin{equation}\label{eq:physical-kernel}
 \ker J_h/(\Rr+\Gg).
\end{equation}
The complete recurrence, Fourier symbols, and root-multiplicity derivations are given in Appendix~\ref{app:kernels}; the discussion below uses the resulting kernel structure and its physical classification.

\begin{figure}[t]
 \centering
 \includegraphics[width=\textwidth]{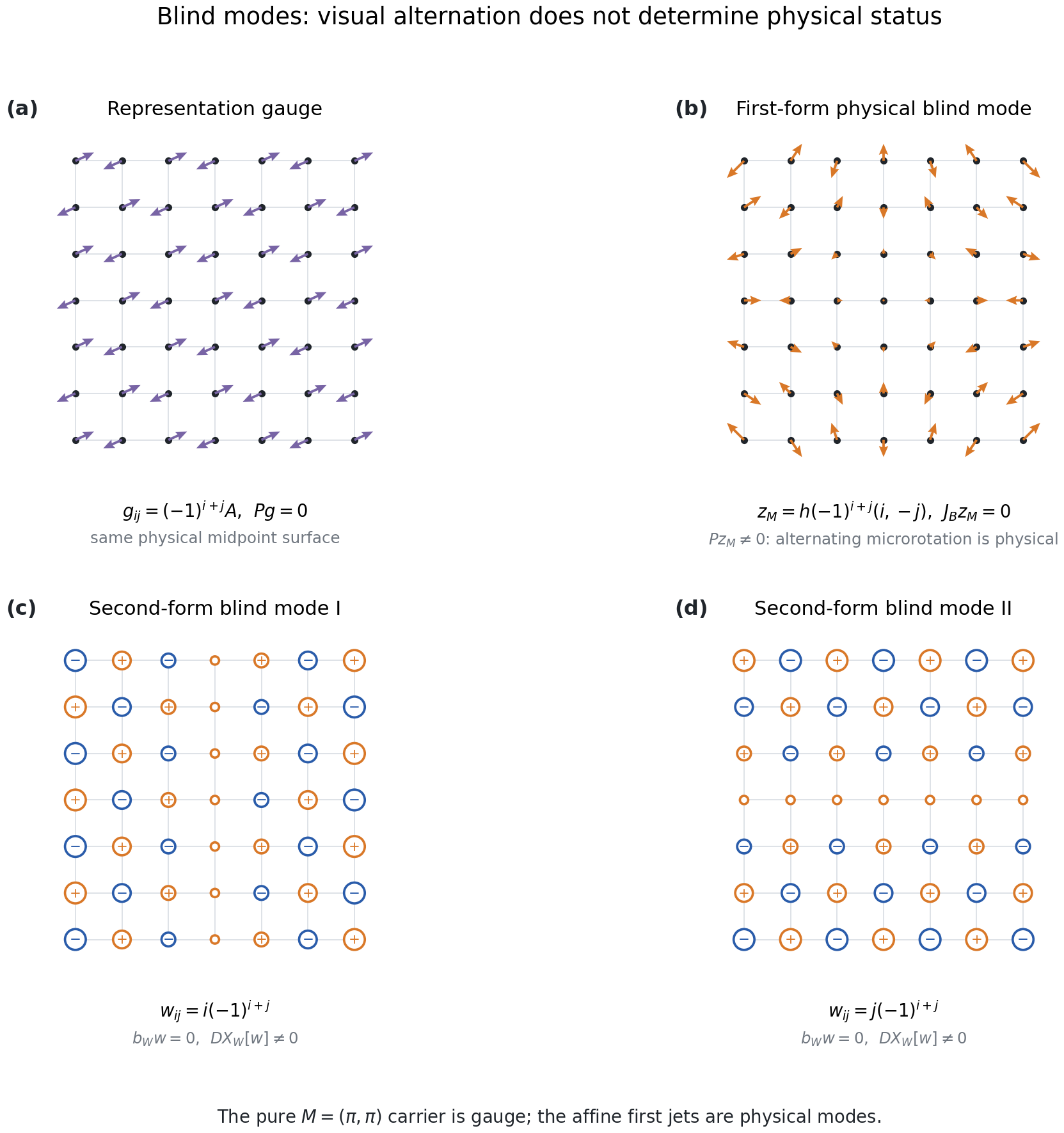}
 \caption{Representation gauge versus physical blind modes. The pure checkerboard carrier $p=(-1)^{i+j}$ leaves all edge midpoints fixed and is therefore a raw representation gauge. The affine-envelope mode $z_M$ produces nonzero physical midpoint motion while remaining invisible to the linearized B metric. The transverse modes $ip$ and $jp$ belong to the physical curvature-blind space of the symmetric $b_W$. The classification is determined by the physical midpoint response and the kernel of the observation operator, not by the visual checkerboard pattern alone.}
 \label{fig:blind}
\end{figure}
\FloatBarrier

\subsection{One-Dimensional Quotient Defect of the First Fundamental Form}\label{sec:first-blind}

The first-form kernel analysis asks whether the B metric misses a genuine in-plane deformation. Because $a_B$ is the Gram matrix of the B frame, it naturally records lengths and angles but does not directly record an infinitesimal rotation of that frame. In the continuum, spatially varying local rotations are restricted by displacement compatibility and cannot generally occur independently. The relevant question is whether the shared-node Checkerboard lattice admits an additional alternating rotation field that preserves the local Gram data while still moving the physical midpoints.

At a flat reference state, write the tangent variation as $\delta T_B=T_B^0F_B$. The differential of the Gram map is
\begin{equation}\label{eq:gram-diff}
 D(T_B^TT_B)[T_B^0F_B]=F_B^T+F_B=2\sym F_B.
\end{equation}
Local stretch, compression, and shear are therefore observed, whereas the skew part of $F_B$ is not. Enforcing compatibility of this local freedom through the shared raw nodes restricts the cell microrotation to
\begin{equation}
 \omega_{ij}=c_0+c_M(-1)^{i+j}.
\end{equation}
The constant term $c_0$ is an ordinary rigid yaw; the alternating term integrates to
\begin{equation}\label{eq:zm}
 z_M(i,j)=h(-1)^{i+j}(i,-j).
\end{equation}

\begin{theorem}[B-metric quotient kernel]\label{thm:first-kernel}
For a connected regular flat structured patch, the in-plane linearization of the B metric satisfies
\begin{equation}\label{eq:first-kernel}
 \ker J_B^{\rm in}=\Rr^{\rm in}\oplus\Gg^{\rm in}
 \oplus\operatorname{span}\{z_M\}.
\end{equation}
In particular, the physical quotient defect is one-dimensional.
\end{theorem}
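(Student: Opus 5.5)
The plan is a direct computation of $\ker J_B^{\rm in}$ at the flat reference $q^0_{ij}=(ih_x,jh_y,0)$, followed by a dimension count and a check of independence.

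\textbf{Step 1: reduce to a cell-wise rotation.} Write the in-plane variation as $u_{ij}\in\R^2$, so that $\delta T_B=T_B^0F_B$ with $T_B^0$ invertible on the plane. By \cref{eq:gram-diff}, $\delta q\in\ker J_B^{\rm in}$ if and only if $\sym F_B=0$ on every cell. Equivalently, $F_B=\omega_B\,\mathsf J$ for a scalar $\omega_B$ and the planar quarter-turn $\mathsf J$. In terms of nodes this reads, for each cell $(i,j)$,
\begin{equation*}
 u_{i+1,j+1}-u_{ij}=\omega_{ij}\,\mathsf J d_1,\qquad u_{i,j+1}-u_{i+1,j}=\omega_{ij}\,\mathsf J d_2 .
\end{equation*}
These are exactly the edge increments of the two diagonal sublattices (nodes with $i+j$ even, respectively odd). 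Each cell contributes one edge to each sublattice. The two sublattices decouple, apart from the common cell rotations $\omega_{ij}$.

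\textbf{Step 2: compatibility.} Around every interior raw node $(i,j)$, the four cells $SW,SE,NE,NW$ have diagonals that close a diamond in the opposite-parity sublattice. Summing the increments around it gives
\begin{equation*}
 (\omega_{SE}-\omega_{NW})\,\mathsf J d_1+(\omega_{NE}-\omega_{SW})\,\mathsf J d_2=0 .
\end{equation*}
Since $d_1,d_2$ are independent, this yields $\omega_{ij}=\omega_{i-1,j-1}$ and $\omega_{i,j-1}=\omega_{i-1,j}$. Hence $\omega$ is constant along both diagonal directions of the cell lattice. Chaining $(i,j)\to(i+1,j+1)\to(i+2,j)$ shows that $\omega$ depends only on the parity of $i+j$, which is the form $\omega_{ij}=c_0+c_M(-1)^{i+j}$.

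This is the step I expect to be the main obstacle. The diamond loops generate all cycles of each sublattice only if the patch is simply connected and has enough interior nodes, so I will need the "connected regular patch" hypothesis, assumed to contain at least a $2\times2$ block of cells. For thin strips the chaining argument fails and the kernel is larger. I would state that exceptional case explicitly, rather than try to prove it away.

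\textbf{Step 3: integration and counting.} Given $(c_0,c_M)$, the displacement on each sublattice is determined by path integration, up to independent translations $a_e$ and $a_o$. The kernel therefore has dimension $2+2+2=6$. I then identify explicit representatives:
\begin{itemize}
\item $a_e=a_o$ gives rigid translations.
\item $a_e=-a_o$ gives the in-plane part of $\Gg$ from \cref{thm:gauge}.
\item $c_M=0$ gives the rigid yaw $u=c_0\mathsf J(x,y)$.
\item $c_M\neq0$ gives $z_M$ of \cref{eq:zm}, up to the $h_x,h_y$ scaling. I check this by direct substitution: for $u=h(-1)^{i+j}(i,-j)$ the diagonal increments are $\pm h(-1)^{i+j}\mathsf J$-rotations of $d_{1,2}$ with the alternating sign.
\end{itemize}
So $\ker J_B^{\rm in}$ is spanned by $\Rr^{\rm in}$ (dimension 3), $\Gg^{\rm in}$ (dimension 2) and $z_M$.

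Directness follows from two checks. First, $Pz_M\neq0$: a horizontal midpoint is $\tfrac h2(-1)^{i+j}(-1,\,\cdot\,)\neq0$. Second, $Pz_M$ is not the midpoint image of a rigid motion, because a rigid motion has smooth, non-alternating midpoint displacement. Together with $\Rr\cap\Gg=0$, this gives \cref{eq:first-kernel}. The quotient $\ker J_B^{\rm in}/(\Rr^{\rm in}+\Gg^{\rm in})$ is then spanned by the class of $z_M$, so it is one-dimensional.
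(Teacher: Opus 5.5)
Your proposal is correct and follows the paper's own argument. You reduce $Da_B=0$ to a skew cell rotation $\omega_{ij}\mathsf J$, use shared-node compatibility to obtain $\omega_{ij}=c_0+c_M(-1)^{i+j}$, then integrate and count to six (your $2+2+2$ is the paper's $2+1+2+1$), identifying translations, gauge, yaw and $z_M$. Your diamond-loop computation around interior nodes, together with the exclusion of single strips, spells out the elimination step that the paper only asserts. One small correction: for $h_x\neq h_y$ the B frame is not orthonormal, so the cell condition is $\delta T_B=\omega\,\mathsf J\,T_B^0$ rather than $\sym F_B=0$; your nodal equations already encode the correct condition.
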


The mechanical content of \cref{thm:first-kernel} is the separation of representation redundancy from a genuine defect of the membrane observation. The pure $M=(\pi,\pi)$ carrier in Fig.~\ref{fig:blind}(a) leaves all edge midpoints fixed and therefore only changes the raw representative of the same physical surface. By contrast, the mode $z_M$ in Fig.~\ref{fig:blind}(b) moves the physical midpoint surface while still satisfying $Da_B[z_M]=0$. A membrane energy built only from $a_B$ would therefore assign an additional zero-stiffness in-plane mechanism to $z_M$. The issue is not a global shortage of stiffness, but a precisely identifiable one-dimensional loss of observability in the physical quotient space.

In Fourier language, $z_M$ is the affine first jet of the pure $M$ carrier. The shared-node recurrence, direct-sum independence, and finite-patch dimension count are given in Appendix~\ref{app:kernels}. This one-dimensional defect in turn dictates that the membrane completion should be as restrained as possible: it should remove $z_M$ and no more.

\subsection{Two Physical Blind Modes of the Second Fundamental Form}\label{sec:second-blind}

The second-form kernel analysis asks whether $b_W$ misses genuine bending deformation. For a flat reference configuration, the first-order change of the normal can be represented by the transverse scalar displacement $w_{ij}$. Because $b_W$ is constructed from diagonal differences of B-face normals, its linearization acts on a diagonal graph that preserves checkerboard parity. It is therefore necessary to determine whether the even and odd sublattices retain fields that are invisible to this discrete-Hessian-type observation.

Let $p_{ij}=(-1)^{i+j}$. Solving the lattice compatibility relations of $Db_W$ gives the following result.
\begin{theorem}[Flat curvature kernel]\label{thm:curvature-kernel}
On a regular rectangular patch,
\begin{equation}\label{eq:curvature-kernel}
 \ker J_{b_W}=\operatorname{span}\{1,i,j,p,ip,jp\}.
\end{equation}
Here $1,i,j$ are transverse rigid motions, $p$ is the scalar checkerboard gauge, and $ip,jp$ are two non-rigid, non-gauge physical blind modes.
\end{theorem}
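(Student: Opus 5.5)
We would linearize $b_W$ about the flat reference $q^0_{ij}=(ih_x,jh_y,0)$ with a purely transverse perturbation $\delta q_{ij}=w_{ij}e_3$. In-plane perturbations change $b_W$ only at second order there, because $N^0=e_3$ and $D^hN^0=0$.

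\textbf{Step 1: the B-face linearization.} On the B cell with lower-left node $(i,j)$, differentiating $N_B\cdot t_\alpha^B=0$ gives
\[
\delta N_B\cdot t_\alpha^{B,0}=-e_3\cdot\delta t_\alpha^B .
\]
The right-hand side equals $-g_1/\ell$ or $-g_2/\ell$, where
\[
g_1=w_{i+1,j+1}-w_{ij},\qquad g_2=w_{i,j+1}-w_{i+1,j}.
\]
Since $D_\alpha^hN^0=0$, only $\delta N$ contributes, and $\delta t_\beta^W\cdot D^hN^0=0$. Hence
\[
\delta(\widetilde b_W)_{\alpha\beta}=-D_\alpha^h\delta N\cdot t_\beta^{W,0},
\]
a difference of the $g_\beta$'s between the two diagonal cells. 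Place the W site at raw node $(i,j)$, so that NE, NW, SW, SE are the cells with lower-left corners $(i,j)$, $(i-1,j)$, $(i-1,j-1)$, $(i,j-1)$. Up to the factor $\ell^{-2}$:
\begin{itemize}
\item $\delta b_{11}\propto w_{i+1,j+1}-2w_{ij}+w_{i-1,j-1}$, the second difference along $d_1$;
\item $\delta b_{22}\propto w_{i-1,j+1}-2w_{ij}+w_{i+1,j-1}$, the second difference along $d_2$;
\item both off-diagonal pairings of $\widetilde b_W$ give the same cross stencil $w_{i,j+1}+w_{i,j-1}-w_{i+1,j}-w_{i-1,j}$.
\end{itemize}
Consequently $\skw\widetilde b_W$ vanishes to first order, and $J_{b_W}$ consists of these three stencils at every W site.

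\textbf{Step 2: solving the interior equations.} Change to diagonal coordinates $u=i+j$, $v=i-j$.
\begin{itemize}
\item The two diagonal stencils involve only nodes of one parity. On each parity sublattice they say that $w$ has vanishing second difference in $u$ along lines of fixed $v$, and in $v$ along lines of fixed $u$.
\item Provided each sublattice of the rectangular patch is diagonally connected (true for $n_x,n_y\ge2$), this forces $w$ to be bilinear on each sublattice: $w=a_\pm+b_\pm u+c_\pm v+d_\pm uv$. This gives eight parameters.
\item The cross stencil at a site of one parity samples the opposite sublattice at $(u\pm1,v\mp1)$ and $(u\pm1,v\pm1)$. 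On a bilinear field it evaluates to $-4d_\mp$, so $d_+=d_-=0$.
\item The remaining six-dimensional space, affine in $(u,v)$ separately on each parity class, equals $\operatorname{span}\{1,i,j,p,ip,jp\}$.
\end{itemize}
Conversely, direct substitution shows that each of these six fields annihilates all three stencils. For $ip$, for example, $p$ is constant along diagonals while $i$ is affine, and the cross stencil gives $-p\,(2i-(i+1)-(i-1))=0$.

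\textbf{Step 3: the boundary sites (main obstacle).} The one-sided value weights $(15/8,-5/4,3/8)$, the derivative weights $(-2,3,-1)/h$, and their tensor-product corner versions could in principle add constraints that kill $ip$ or $jp$, or fail to be annihilated by them. We would first try to show that each boundary reconstruction is a linear combination of the interior diagonal differences $g_\beta$ of adjacent cells. The cell quantities $g_\beta$ for $ip$ and $jp$ are constant along each row of cells used by the one-dimensional rule, up to a sign pattern. Moment exactness for constants and linears should then make the extrapolated normal differences vanish. If this fails for the raw stencil, we would instead compute $g_\beta$ for $ip$ and $jp$ explicitly and check the three-point rules term by term.

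\textbf{Step 4: classification.}
\begin{itemize}
\item The fields $1,i,j$ are the transverse translation and the two infinitesimal tilts, hence rigid.
\item The field $p$ lies in $\Gg$ by \cref{thm:gauge}.
\item For $ip$, the midpoint of a horizontal edge is $\tfrac12\bigl(ip_{ij}+(i+1)p_{i+1,j}\bigr)=-\tfrac12p_{ij}\neq0$, so $P(ip)\neq0$; similarly $P(jp)\neq0$.
\item An independence check on the span shows that no combination of $ip,jp$ lies in $\Rr+\Gg$.
\end{itemize}
This yields the two-dimensional physical blind space.
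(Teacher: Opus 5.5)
Your Steps~1, 2 and~4 follow the paper's argument in Appendix~\ref{app:kernels}: linearize in the transverse field, read $Db_W$ as a parity-preserving diagonal Hessian, solve in $(i+j,i-j)$ to get parity-affine branches, rewrite them as $\operatorname{span}\{1,i,j,p,ip,jp\}$, and classify the modes via $P$. You make the stencils and the first-order vanishing of $\skw\widetilde b_W$ more explicit; the paper adds only a Fourier reading (double zeros of the symbol at $\Gamma$ and $M$).

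The genuine problem is Step~3. The paper's proof never uses the one-sided closure. It computes the kernel of the interior stencil on complete four-B W patches, and that is the statement your Steps~1--2 prove. The mechanism you propose for the boundary rows does not work. For $ip$ you have $g_1=g_2=p_{ij}$, and for $jp$ you have $g_1=-g_2=p_{ij}$. Along any parametric row or column of cells these data therefore alternate like $(1,-1,1)$, and moment exactness gives no cancellation on alternating data: the value weights give $\tfrac{15}{8}+\tfrac54+\tfrac38=\tfrac72$ and the derivative weights give $-6/h$.

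Concretely, on a square grid $ip$ perturbs the B normals by $\delta N_{(c,r)}=-(p_{cr}/h)e_y$, where $(c,r)$ is the cell's lower-left node. Extrapolating the tangential differences $(\delta N_{(i,r)}-\delta N_{(i-1,r)})/h$, $r=0,1,2$, to the bottom boundary node $(i,0)$ gives $-(7p_{i0}/h^2)e_y\neq0$. So once closure rows are included, $ip$ and $jp$ need not lie in the kernel at all. Only a rule applied along the diagonals, where these data are constant, would give your cancellation. You should drop Step~3 and state explicitly that $J_{b_W}$ is the linearization at complete interior W sites, which is what the paper's recurrence and Fourier argument establish.

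A smaller point concerns Step~2: diagonal connectivity is not what forces bilinearity. In $(u,v)$ coordinates each sublattice is a rotated rectangle. Passing bilinearity from one diagonal to the next needs at least two antidiagonal second differences linking them, i.e.\ diagonals with at least two interior nodes. This holds for $\min(n_x,n_y)\ge3$ but fails for $\min(n_x,n_y)=2$. For example, with $n_x=2$, $n_y=6$, the even sublattice has $11$ nodes and only $6$ diagonal equations, so its pure-second-difference kernel is strictly larger than the bilinear space. The theorem still holds in that case, for a different reason. At each site $(1,j)$ the three stencils determine row $j+1$ from rows $j-1$ and $j$, so the kernel has dimension exactly $6$. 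It therefore equals the span of the six fields, which are independent on rows $0,1$. Handle the width-two strip by this recursion instead.
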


The six directions are distinguished by whether they move the physical midpoint surface. The constant mode $1$ is a transverse translation, and $i$ and $j$ are infinitesimal rigid rotations about the two in-plane directions; these belong to the rigid kernel of the continuous shell. The pure carrier $p$ takes opposite values at the two ends of every raw edge, so all edge-midpoint averages vanish and the mode is the scalar counterpart of the representation gauge in Section~\ref{sec:quotient}. The two remaining first jets are different:
\[
 ip=i(-1)^{i+j},\qquad jp=j(-1)^{i+j}.
\]
In $ip$, the amplitude of the alternating pattern varies linearly in the $i$ direction; in $jp$, it varies linearly in the $j$ direction. Both satisfy $P(ip)\neq0$ and $P(jp)\neq0$, so both genuinely move the physical midpoint surface, yet they also satisfy $Db_W=0$. Thus the symmetric W-curvature observation produces no first-order curvature response to either mode.

Figures~\ref{fig:blind}(c) and \ref{fig:blind}(d) show these two modes separately. Their checkerboard appearance resembles the representation gauge in Fig.~\ref{fig:blind}(a), but their edge-midpoint displacements are nonzero. The missing information therefore belongs to the W-centered symmetric curvature observation itself. Mathematically, the two modes are the affine first jets of the $M$-point carrier $p$ on the parity-preserving diagonal stencil. The full parity-affine recurrence and Fourier root-multiplicity proof are given in Appendix~\ref{app:kernels}.

A bending energy built only from $b_W$ would therefore retain two additional zero-stiffness curvature mechanisms. Together with the one-dimensional membrane defect identified above, the missing information is now completely characterized: one membrane direction and two curvature directions. Their completion must remove these underconstrained modes without violating the distinct thickness scalings of membrane and bending energies.
\section{Scale-Compatible Completion and Finite-Configuration Shell Energy}\label{sec:completion}

\subsection{Design Constraints for Minimal Membrane and Curvature-Compatible Completion}\label{sec:completion-principle}

Section~\ref{sec:blind} identified exactly which physical information is missing: one membrane direction and two curvature directions. The completion must remove these underconstraints while preserving, as far as possible, the near-isometric deformation space of the continuous thin shell. This requirement is especially stringent on the membrane side. Any additional $O(t)$ constraint beyond what is needed to remove the defect assigns membrane stiffness to deformation that should be governed primarily by bending, and the resulting artificial stiffness is strongly amplified in the thin-shell limit. The distinct thickness scalings of membrane and bending therefore impose different design requirements on the two completion terms.

Near the reference state, the discrete tangent can be written as
\begin{equation}\label{eq:completion-scale}
 K(t)=tK_m+t^3K_b.
\end{equation}
If an unnecessary stiffness $\delta K_m$ is added to the membrane sector, its magnitude relative to the physical bending stiffness scales as
\begin{equation}\label{eq:membrane-amplification}
 \frac{t\,\delta K_m}{t^3K_b}\sim\frac{\delta K_m}{t^2}.
\end{equation}
Hence even a small $\delta K_m$ can suppress bending as $t\to0$ through the relative $t^{-2}$ amplification and thereby induce membrane locking. Combined with the one-dimensional quotient defect of \cref{thm:first-kernel}, this makes the membrane completion highly constrained: it should provide restoring stiffness only in the missing $z_M$ direction, preserve rigid motions and the raw gauge, vanish on homogeneous affine membrane deformation, and avoid unnecessarily shrinking the near-isometric space of the shell. At the linear-observation level, the minimum sufficient added rank for a one-dimensional defect is one.

The curvature sector is different. An $X_W$-type completion enters the same $t^3$ sector as the physical bending stiffness, in the form $t^3(K_b+K_{X_W})$, and therefore does not acquire a $t^{-2}$ amplification relative to bending. It need not satisfy the same minimum-rank requirement as the membrane completion. Instead, it must respond to the two blind modes $ip$ and $jp$, remain objective under finite rotations, preserve smooth consistency, and avoid dominating the physical low-frequency bending response at finite resolution.

Accordingly, the membrane side uses a quotient-minimal completion that restricts the added $O(t)$ constraint to the single missing direction, whereas the curvature side uses an objective compatibility completion that restores the missing normal information within the existing $O(t^3)$ bending sector. These principles lead to $X_M$ and $X_W^{\mathrm{rel}}$, respectively.

\subsection{Intrinsic Membrane Compatibility Coordinate \texorpdfstring{$X_M$}{XM}}\label{sec:xm}

The first fundamental form already records the Gram data of the B frame. What it misses is the compatibility of alternating microrotations, which is invisible to the Gram map. The membrane completion should therefore not introduce another measure of stretch or shear; it should detect only whether neighboring B-frame rotations contain the $M$-point variation associated with $z_M$. A suitable supplementary observation should vanish on homogeneous affine membrane deformation, be nonzero on $z_M$, and remain invariant under rigid motions and the raw gauge.

The construction of $X_M$ follows four requirements: invariance under superposed rigid motions, invariance under the raw gauge, exact vanishing on homogeneous affine membrane deformation, and nonzero response to $z_M$. Since the physical quotient defect is one-dimensional, only one scalar observation is required. The implementation first uses the polar factor to separate stretch from rotation in each B frame, compares the reference-relative rotations of neighboring B frames, measures their mixed connection variation over a plaquette, and finally extracts the global $M$ character as a scalar moment.

For a nondegenerate, orientation-preserving B cell, define the polar-normalized frame
\begin{equation}\label{eq:polar-frame}
 E_B=T_Ba_B^{-1/2},\qquad
 \mathcal E_B=[E_{B,1},E_{B,2},E_{B,1}\times E_{B,2}]\in SO(3).
\end{equation}
The cell rotation relative to the reference configuration is
\begin{equation}
 R_B(q;q_0)=\mathcal E_B(q)\mathcal E_B(q_0)^T.
\end{equation}
For adjacent cells, define the objective relative connection $C_B^\alpha=R_B^TR_{B+e_\alpha}$. On the admissible set where the rotation angle does not cross the $\pi$ cut locus, use the Cayley axial map
\begin{equation}
 \vartheta(C)=2\operatorname{axl}\big[(C-I)(C+I)^{-1}\big].
\end{equation}
Let
\begin{equation}\label{eq:connection-reference-normal}
 \bar N_B^\alpha=
 \frac{N_B^0+N_{B+e_\alpha}^0}
 {\norm{N_B^0+N_{B+e_\alpha}^0}},
 \qquad
 \theta_B^\alpha=\bar N_B^\alpha\cdot\vartheta(C_B^\alpha),
\end{equation}
where $\bar N_B^\alpha$ is the normalized average reference normal associated with the $\alpha$-connection. On a complete B-cell plaquette, define the mixed moment
\begin{equation}\label{eq:mixed-moment}
 \mu_{ij}=\frac12\left[(\theta^x_{i,j+1}-\theta^x_{ij})
 +(\theta^y_{i+1,j}-\theta^y_{ij})\right].
\end{equation}
Let $A_{ij}^{M,0}$ denote the mean of the four neighboring B-reference areas covered by the $M$-plaquette indexed by $(i,j)$, and let $A_M^0=\sum A_{ij}^{M,0}$. The symbol $M$ is used here to denote the alternating plaquette and to avoid confusion with an ordinary cell area. The membrane compatibility coordinate is
\begin{equation}\label{eq:xm}
 \XM(q;q_0)=\frac{1}{4\sqrt{A_M^0}}
 \sum_{ij}A_{ij}^{M,0}(-1)^{i+j}\mu_{ij}(q;q_0).
\end{equation}
$X_M$ is a topology-fixed $M$-character moment. It does not depend on a benchmark kernel, load, or thickness. It is a single global scalar and therefore produces a rank-one update in the linear tangent. The coefficient paired with this normalization is
\begin{equation}\label{eq:beta-xm}
 \beta_M=8\mu=\frac{4E}{1+\nu}.
\end{equation}
This coefficient normalizes the missing $M$-point shear channel to the same trace-free shear modulus as the continuous isotropic material; no benchmark fitting is involved. The matching calculation is given in Appendix~\ref{app:completion-coefficients}.

\begin{theorem}[Quotient-minimal membrane completion]\label{thm:xm-minimal}
$X_M$ is exactly invariant under superposed rigid motions and the raw checkerboard gauge, vanishes for flat affine membrane strains, and satisfies
\begin{equation}\label{eq:xm-joint-kernel}
 \ker(J_B,D\XM)=\Rr^{\rm in}\oplus\Gg^{\rm in}.
\end{equation}
Because the quotient defect in \cref{thm:first-kernel} is one-dimensional, any linear supplement that removes it must have rank at least one. Thus $X_M$ is minimal in this quotient-rank sense.
\end{theorem}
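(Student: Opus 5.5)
The argument splits into four parts: exact invariances (finite amplitude), vanishing on affine strains, the joint kernel, and minimality.

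\textbf{Invariances.} Under a superposed rigid motion $q\mapsto Qq+c$ with $Q\in SO(3)$, each B tangent satisfies $t_\alpha^B\mapsto Qt_\alpha^B$. Hence $a_B$ is unchanged, $E_B\mapsto QE_B$ and $\mathcal E_B\mapsto Q\mathcal E_B$. It follows that $R_B\mapsto QR_B$ and $C_B^\alpha=R_B^TQ^TQR_{B+e_\alpha}$ is unchanged. Consequently $\theta_B^\alpha$, $\mu_{ij}$ and $\XM$ are invariant; the reference normals $\bar N_B^\alpha$ are fixed because they depend only on $q_0$.

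For the gauge, $q_{11}-q_{00}$ and $q_{01}-q_{10}$ connect same-parity nodes, so adding $(-1)^{i+j}A$ leaves every $T_B$ unchanged. Everything downstream is then exactly invariant, as already noted for $b_W$ after \cref{eq:second-form}.

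For a flat affine map $q=Fq_0$, every B frame is $T_B=FT_B^0$. Since $T_B^0$ is constant on a regular flat patch, all $\mathcal E_B$ coincide. Hence all $R_B$ are equal, $C_B^\alpha=I$, $\vartheta=0$, and $\XM=0$.

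\textbf{Joint kernel.} I would use \cref{thm:first-kernel}: $\ker J_B^{\rm in}=\Rr^{\rm in}\oplus\Gg^{\rm in}\oplus\operatorname{span}\{z_M\}$. The inclusion $\Rr^{\rm in}\oplus\Gg^{\rm in}\subseteq\ker D\XM$ follows from the exact invariances by differentiating at $q_0$. The kernel statement therefore reduces to showing $D\XM[z_M]\neq0$.

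At the flat reference, linearize the construction. Write $\delta R_B=[\delta\omega_B e_3]_\times$, where the in-plane rotation $\delta\omega_B$ is the skew part of $F_B$ from \cref{eq:gram-diff}. Then $\delta\vartheta(C_B^\alpha)=(\delta\omega_{B+e_\alpha}-\delta\omega_B)e_3$, because the Cayley map has derivative $I$ at the identity. With $\bar N=e_3$ this gives $\delta\theta_B^\alpha=\delta\omega_{B+e_\alpha}-\delta\omega_B$.

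For $z_M$ the microrotation is $\omega_{ij}=c_M(-1)^{i+j}$ with $c_M\neq0$. The first differences are therefore $\pm2c_M$-alternating, and the mixed differences in $\mu_{ij}$ are proportional to $c_M(-1)^{i+j}$ with a nonzero constant; I expect this constant to be $\pm4$, to be confirmed by the stencil computation. Since the areas $A^{M,0}_{ij}$ are equal and positive, the $M$-character sum $\sum A(-1)^{i+j}\mu_{ij}$ adds coherently and is nonzero.

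\textbf{Main obstacle.} The delicate point is this stencil bookkeeping. The index conventions for $\theta^x_{i,j+1}-\theta^x_{ij}$ and $\theta^y_{i+1,j}-\theta^y_{ij}$ must add rather than cancel on the alternating field. Applied to $\omega=c_M p$, the first term is a $y$-difference of an $x$-difference, which equals $4c_M p_{ij}$. The second term has the same sign, so the sum is indeed nonzero. I would check this explicitly on a $2\times2$ plaquette.

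\textbf{Minimality.} This is linear algebra. Suppose $L$ is a linear supplement with $\ker(J_B,L)=\Rr^{\rm in}\oplus\Gg^{\rm in}$. Then $L$ restricted to $\ker J_B$ must be injective on the quotient $\ker J_B/(\Rr+\Gg)$, which is one-dimensional. Hence $\operatorname{rank}L\ge1$, and $D\XM$ attains this bound as a single scalar.
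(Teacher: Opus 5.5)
Your proposal is correct and follows essentially the same route as the paper's proof (Appendices B.1 and C.2). Exact invariance comes from the cancellation of the left factor $Q$ in $C_B^\alpha$ and from the same-parity B diagonals; affine vanishing comes from identical polar rotations; the joint kernel follows from \cref{thm:first-kernel} plus $DX_M[z_M]\neq0$; and minimality follows from the one-dimensional quotient. Your stencil check is also right: on $\omega=c_M(-1)^{i+j}$ one gets $\mu_{ij}=4c_M(-1)^{i+j}$, hence $|DX_M[z_M]|^2=A_M^0c_M^2$, which is exactly the normalization the paper asserts.
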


Here ``minimum rank'' refers specifically to the minimum linear observation required to remove a one-dimensional quotient defect. It does not impose any broader statement beyond the flat linearized setting. The algebra establishing objectivity, affine exactness, and $DX_M[z_M]\neq0$ is given in Appendix~\ref{app:objectivity}.

\subsection{Objective Curvature Compatibility Coordinate \texorpdfstring{$X_W^{\mathrm{rel}}$}{XW-rel}}\label{sec:xw}

The curvature completion targets a different type of missing information. Section~\ref{sec:second-blind} showed that $ip$ and $jp$ move the physical midpoint surface without changing the linearized symmetric $b_W$. In a flat reference configuration, the alternating sum of the four neighboring B normals responds to both modes and therefore provides a direct probe of the missing curvature compatibility.

A curved reference configuration and finite rotation introduce an additional requirement: current and reference compatibility vectors must be compared in consistent frames. If two spatial vectors are subtracted directly in a fixed global frame, a pure rigid rotation generates a spurious difference. The reference compatibility vector must therefore be transported with the local surface rotation before it is compared with the current quantity. We construct this transport from the surface-polar factor of the W-centered tangent deformation map. A single W rotation is applied to the reference quantity, which preserves both rigid-motion objectivity and the alternating normal component that must remain observable.

Define the alternating sum of the four neighboring B normals by
\begin{equation}\label{eq:xw-raw}
 X_W=N_{NE}-N_{NW}+N_{SW}-N_{SE}.
\end{equation}
In the flat linearization, $X_W$ is nonzero on $ip$ and $jp$, and the joint kernel retains only $1,i,j,p$. For a curved reference, the direct difference $X_W(q)-X_W(q_0)$ is not objective. Define the W-centered unit normal and three-dimensional frame by
\begin{equation}\label{eq:w-frame}
 \begin{aligned}
 n_W(q)&=\frac{t_1^W(q)\times t_2^W(q)}
 {\norm{t_1^W(q)\times t_2^W(q)}},\\
 E_W(q)&=[\,t_1^W(q),\;t_2^W(q),\;n_W(q)\,],
 \qquad E_W^0=E_W(q_0).
 \end{aligned}
\end{equation}
On the admissible set $F_W\in GL^+(3)$, the current/reference deformation map and its orientation-preserving right polar rotation are
\begin{equation}\label{eq:w-polar}
 \begin{aligned}
 F_W(q,q_0)&=E_W(q)[E_W(q_0)]^{-1},\\
 R_W(q,q_0)&=F_W(q,q_0)
 [F_W(q,q_0)^T F_W(q,q_0)]^{-1/2}\in SO(3).
 \end{aligned}
\end{equation}
The reference-relative curvature compatibility coordinate is then
\begin{equation}\label{eq:xwrel}
 \XW(q;q_0)=X_W(q)-R_W(q,q_0)X_W(q_0).
\end{equation}
Under a superposed rigid rotation $Q$, $X_W(q)\mapsto QX_W(q)$ and $R_W\mapsto QR_W$, so $X_W^{\rm rel}\mapsto QX_W^{\rm rel}$ and its Euclidean norm is objective. For a smooth surface family, $X_W^{\rm rel}=O(h^2)$.

The curvature completion also uses an explicit, non-fitted coefficient. Let $g_W^0:=a_W(q_0)$ be the local reference W metric and $h_1,h_2$ the mesh spacings in the two parametric directions. Exact integration of an auxiliary-Q1 normal-gradient energy yields the geometric factor
\begin{equation}\label{eq:gamma-xw}
 \gamma_W^0=\frac{\sqrt{\det g_W^0}}{12}
 \left[(g_W^0)^{11}\frac{h_2}{h_1}+(g_W^0)^{22}\frac{h_1}{h_2}\right],
\end{equation}
and therefore the coefficient multiplying $X_W^{\mathrm{rel}}$ in the bending energy is
\begin{equation}\label{eq:beta-xw}
 \beta_W^0=\mu\gamma_W^0
 =\frac{\mu\sqrt{\det g_W^0}}{12}
 \left[(g_W^0)^{11}\frac{h_2}{h_1}+(g_W^0)^{22}\frac{h_1}{h_2}\right].
\end{equation}
For an orthogonal square grid, $\gamma_W^0=1/6$ and $\beta_W^0=\mu/6$. The coefficients $\beta_M$ and $\beta_W^0$ are fixed, respectively, by the constitutive normalization of the missing membrane channel and by the normal-gradient energy of the W patch; neither depends on benchmark tuning. Objectivity of the surface-polar transport is detailed in Appendix~\ref{app:objectivity}, and both coefficient derivations are given in Appendix~\ref{app:completion-coefficients}.

\subsection{Finite-Configuration Discrete Potential Energy}\label{sec:energy}

With the membrane and curvature compatibility observations in place, both must be incorporated into one finite-configuration energy at the correct thin-shell scales. The construction preserves the membrane--bending separation of the Koiter model: $a_B$ and $X_M$ together represent $O(t)$ membrane information, whereas $b_W$ and $X_W^{\rm rel}$ represent $O(t^3)$ bending and normal-compatibility information. The completion terms thereby repair the observability of the discrete geometry without changing the physical membrane and bending scalings of the continuum model.

For a positive-definite reference metric $A$ and a symmetric tensor $Z$, use the plane-stress St.~Venant--Kirchhoff/Koiter contraction
\begin{equation}\label{eq:qa}
 Q_A(Z)=\frac{\alpha}{2}[\tr(A^{-1}Z)]^2
 +\mu\tr[(A^{-1}Z)^2],
 \quad
 \alpha=\frac{E\nu}{1-\nu^2},\quad
 \mu=\frac{E}{2(1+\nu)}.
\end{equation}
The membrane energy is
\begin{equation}\label{eq:membrane-energy}
 E_m(q;q_0)=\frac t4\left[
 \sum_B A_B^0Q_{a_B^0}(a_B(q)-a_B(q_0))+\beta_M\XM(q;q_0)^2
 \right],\qquad \beta_M=8\mu.
\end{equation}
Here $A_B^0$ is the reference physical quadrature area. The value of $\beta_M$ is paired with the $M$-character normalization in \cref{eq:xm}, so that the trace-free shear channel missed by $a_B$ carries the same shear modulus as in the continuous isotropic membrane energy. It is independent of load, thickness, or benchmark tuning.

The bending and curvature-compatibility energies are
\begin{align}
 E_b(q;q_0)&=\frac{t^3}{12}\sum_{W\,\mathrm{all}}
 \theta_WA_W^0Q_{a_W^0}(b_W(q)-b_W(q_0)),\label{eq:bending-energy}\\
 E_{X_W}(q;q_0)&=\frac{t^3}{12}
 \sum_{W\in\mathcal W_{\rm int}^{\rm complete}}
 \beta_W^0\norm{\XW(q;q_0)}^2,\qquad
 \beta_W^0=\mu\gamma_W^0.\label{eq:xw-energy}
\end{align}
Here $A_W^0$ denotes the full reference dual area associated with a W site, and $\theta_W\in\{1,1/2,1/4\}$ is the interior/edge/corner dual fraction. The curvature-compatibility coefficient $\beta_W^0$ is given by \cref{eq:beta-xw}: $\mu$ provides the material bending scale, while $\gamma_W^0$ encodes the metric and aspect ratio of the reference W patch. The $E_{X_W}$ term is accumulated only on complete interior W patches, consistent with the complete patch used in the auxiliary-Q1 derivation. With external-load potential $\mathcal W_{\rm ext}$, the total potential is
\begin{equation}\label{eq:potential}
 \Pi_h(q;\lambda)=E_m+E_b+E_{X_W}-\mathcal W_{\rm ext}(q;\lambda),
 \qquad R= D_q\Pi_h=0.
\end{equation}
The nonlinear residual and consistent tangent $K_T=D_q^2\Pi_h$ are both obtained by automatic differentiation of the same finite-configuration energy.

The resulting potential still uses nodal positions as its only independent unknowns, while the internal geometry is evaluated exclusively on the physical midpoint/B-face surface. A finite-configuration thin-shell energy should additionally satisfy three basic requirements: changing the raw representative or superposing a rigid motion must not create artificial energy; the minimal membrane completion should preserve the principal near-isometric branches of curved shells; and the influence of residual membrane stiffness in the fixed-$h$ ultra-thin regime should be explicitly interpretable. Section~\ref{sec:analysis} addresses these properties in turn.
\section{Analytical Properties}\label{sec:analysis}

\subsection{Representation Invariance, Rigid-Motion Objectivity, and Reference-State Consistency}

A finite-configuration shell energy must first be independent of the chosen representation and objective under Euclidean rigid motions: internal energy should respond only to actual material deformation. In the present representation, this requirement has three distinct components. First, the same midpoint surface may admit different raw representatives, and changing the representative must not change the energy. Second, an arbitrary rigid translation or rotation superposed on the entire shell must leave all material strains and the internal energy unchanged. Third, the reference configuration itself must be stress free. These conditions correspond to gauge invariance, rigid-motion objectivity, and reference-state consistency.

\begin{proposition}\label{prop:invariance}
On the admissible set where all B/W frames are nondegenerate and the polar/Cayley maps are well defined, the internal energy satisfies:
\begin{enumerate}
 \item for every raw gauge $g_{ij}=(-1)^{i+j}A$, $E_{\rm int}(q+g;q_0)=E_{\rm int}(q;q_0)$;
 \item for every $Q\in SO(3)$ and $c\in\R^3$, a superposed rigid motion of the current configuration leaves the internal energy unchanged;
 \item $E_{\rm int}(q_0;q_0)=0$ and $DE_{\rm int}(q_0;q_0)=0$.
\end{enumerate}
\end{proposition}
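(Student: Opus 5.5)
The plan is to prove all three properties term by term. The internal energy is $E_{\rm int}=E_m+E_b+E_{X_W}$, and each summand depends on $q$ only through a small set of building blocks: the B tangents $t^B_\alpha$ of \cref{eq:b-tangents}, the B normals $N_B$, the W tangents and normals built from them, and the rotation quantities $R_B$, $C_B^\alpha$, $\vartheta$, $R_W$. I will first record how each building block transforms, and then pass the transformation rules through the quadratic forms $Q_A$ and the squared norms.

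\textbf{Gauge invariance.} Every B tangent is a diagonal difference $q_{11}-q_{00}$ or $q_{01}-q_{10}$. The two endpoints of such a diagonal have equal parity $i+j$, so $g_{11}-g_{00}=A-A=0$. The tangents are therefore unchanged under $q\mapsto q+g$, even at finite amplitude. Since $a_B$, $N_B$, $E_B$, $\mathcal E_B$, $R_B$, $\theta_B^\alpha$, $\mu_{ij}$, $X_M$, $T_W$, $a_W$, $D^h_\alpha N$, $b_W$, $X_W$, $n_W$, $F_W$ and $R_W$ are all functions of the B tangents, each is invariant. This includes the one-sided boundary reconstructions, which are linear combinations of these same quantities. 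Hence $E_{\rm int}(q+g;q_0)=E_{\rm int}(q;q_0)$.

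\textbf{Rigid-motion objectivity.} Let $q\mapsto Qq+c$. Translations drop out because only differences of positions appear. Under the rotation, $T_B\mapsto QT_B$ and $a_B\mapsto a_B$, while the cross product gives $N_B\mapsto QN_B$ since $\det Q=1$. It follows that $E_B\mapsto QE_B$, $\mathcal E_B\mapsto Q\mathcal E_B$, and $R_B\mapsto QR_B$. The connection is then unchanged: $C_B^\alpha=R_B^TQ^TQR_{B+e_\alpha}$. Consequently $\vartheta$, $\theta_B^\alpha$ (which uses the fixed reference normal), $\mu_{ij}$ and $X_M$ are all invariant.

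For the curvature sector, $t^W_\alpha\mapsto Qt^W_\alpha$ and $D^h_\alpha N\mapsto QD^h_\alpha N$, so both $a_W$ and $b_W$ are invariant. In addition $n_W\mapsto Qn_W$, which gives $E_W\mapsto QE_W$ and $F_W\mapsto QF_W$. The right stretch is unchanged, $(QF_W)^T(QF_W)=F_W^TF_W$, so $R_W\mapsto QR_W$. Together with $X_W(q)\mapsto QX_W(q)$ this gives $X_W^{\rm rel}\mapsto QX_W^{\rm rel}$, and the norm $\norm{X_W^{\rm rel}}$ is preserved. All three energies are therefore invariant.

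\textbf{Reference consistency.} At $q=q_0$ the metric and curvature differences vanish, $a_B(q_0)-a_B(q_0)=0$ and $b_W(q_0)-b_W(q_0)=0$. Moreover $R_B=I$, so $C=I$, $\vartheta(I)=0$, $\mu_{ij}=0$ and $X_M=0$. Likewise $F_W=I$ and $R_W=I$, so $X_W^{\rm rel}=0$. Each energy term is a quadratic form, $Q_A(Z)$ or $\beta X^2$, evaluated on a quantity $Z(q)$ that is differentiable on the admissible set and vanishes at $q_0$. This gives $E_{\rm int}(q_0;q_0)=0$. By the chain rule the first derivative at $q_0$ has the form $2\,\mathrm{Bil}(Z(q_0),DZ[\cdot])=0$, so $DE_{\rm int}(q_0;q_0)=0$.

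\textbf{Main obstacle.} I expect the difficulty to lie in the well-definedness and smoothness hypotheses rather than in the algebra. Three points need checking. First, $F_W\in GL^+(3)$ must hold near $q_0$, so that the polar factor is smooth there. Second, $C+I$ must be invertible, meaning the rotation angle stays away from $\pi$. Third, $a_B$ must remain positive definite. I would argue that all three conditions hold at $q_0$, where $F_W=I$, $C=I$ and the B frames are nondegenerate by assumption, and that they therefore persist on an open neighbourhood. This places the whole admissible set in the stated domain. On that domain the square-root, polar and Cayley maps are smooth, which justifies differentiating the energy.
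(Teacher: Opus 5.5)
Your proof is correct and follows the paper's argument (Appendix~C): same-parity diagonal differences give exact gauge invariance; $\mathcal E_B\mapsto Q\mathcal E_B$ with cancellation in $C_B^\alpha$, together with $F_W\mapsto QF_W$ and hence $R_W\mapsto QR_W$, gives objectivity; and all reference-relative residuals vanish at $q_0$. One small correction to your closing remark: it should claim only that a neighbourhood of $q_0$ lies in the admissible set, which is what differentiability at $q_0$ requires, not that this ``places the whole admissible set in the stated domain,'' which is true by definition.
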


The first property follows from the exact gauge invariance of all B diagonal differences. For the second, the metric and $b_W$ are invariant, $X_W^{\rm rel}$ is covariant, and $X_M$ is invariant because it is built from relative rotations. The third follows because all reference-relative residual quantities vanish at $q=q_0$. These are the basic objectivity requirements of geometrically exact shell theory \cite{simoFox1989,sauerDuong2017}. Gauge invariance states that different raw representatives describe the same physical state, whereas rigid objectivity expresses Euclidean frame indifference; the two cannot be replaced by the same type of boundary fixing. Componentwise proofs are provided in Appendix~\ref{app:objectivity}.

Proposition~\ref{prop:consistency} further ensures that $a_B$ and $b_W$ approximate the smooth continuous fundamental forms to second order, that $X_W^{\rm rel}=O(h^2)$, and that $X_M$ vanishes exactly for homogeneous affine membrane deformation. These properties exclude artificial material response under rigid motion, in the reference state, and for uniform affine membrane strain. The remaining question is whether the membrane completion preserves the near-isometric bending motions permitted by the continuum theory on a curved surface.

\subsection{Near-Isometry Approximation on Aligned Generalized Cylinders}\label{sec:cylinder-theorem}

The flat kernel establishes that $X_M$ removes the target checkerboard mechanism, but it does not show whether a sufficiently rich near-isometric space survives on a curved surface. The dominant bending response of a thin shell follows near-isometric motion of the continuous midsurface. If the completed discrete membrane kernel could not approximate such motion, a perfectly correct flat nullity count would still be compatible with artificial membrane stiffness on curved shells.

A generalized cylinder provides a useful analytically tractable test class. It has nonzero initial curvature and is therefore nontrivial from the viewpoint of shell bending, while at the same time it admits an explicit family of infinitesimal isometries. This makes it possible to estimate the distance between a continuous isometric field and the completed discrete membrane kernel. Under arc-length sampling and alignment of the grid with the generator direction, this distance can be bounded directly.

\begin{theorem}[Near-isometry approximation on an aligned generalized cylinder]\label{thm:cylinder}
Let the reference surface be a generalized cylinder
\[
 r(s,z)=c(s)+ze_z,
 \qquad c'(s)\cdot e_z=0,\qquad |c'(s)|=1,
\]
with $c\in C^4$ and bounded signed curvature. Define $e_\theta=c'(s)$ and $n=e_\theta\times e_z$, with the sign convention $e_\theta'=-\kappa n$. Consider a structured mesh aligned with the $s$ and $z$ directions, belonging to a shape-regular family and having uniformly nondegenerate section chords. Let
\[
 v(s,z)=u(s)e_\theta(s)+w(s)n(s)+c_z e_z,
 \qquad u'(s)+\kappa(s)w(s)=0,
\]
with $u,w\in C^3$ and constant $c_z$, so that $v$ is a row-independent infinitesimal isometry (up to the rigid translation $c_ze_z$). For a free patch, or for a patch whose essential trace is compatible with the discrete representative constructed below, there exists, for sufficiently small $h$, a discrete field $v_h^C$ in the exact completed membrane kernel such that
\begin{equation}\label{eq:cylinder-error}
 \norm{I_hv-v_h^C}_{0,h}+h\abs{I_hv-v_h^C}_{1,h}\le Ch^2,
\end{equation}
where $I_hv$ denotes nodal sampling of $v$, and $\norm{\cdot}_{0,h}$ and $\abs{\cdot}_{1,h}$ are the standard trapezoidal discrete $L^2$ norm and first-difference $H^1$ seminorm on the structured grid. The constant $C$ is independent of $h$ but depends on the domain length, the indicated regularity bounds of $c$ and $v$, chord nondegeneracy, and mesh shape regularity.
\end{theorem}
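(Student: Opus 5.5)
The plan is to construct $v_h^C$ explicitly as a row-independent field, $v_h^C(i,j)=V_i+c_ze_z$, that lies exactly in the kernel of the linearized B metric and of $D\XM$. The comparison with $I_hv$ then reduces to a one-dimensional consistency estimate along the section curve $c$.

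\textbf{Step 1 (kernel conditions for row-independent fields).} On the aligned mesh the nodes are $q_{ij}=c(s_i)+z_je_z$. Write $\Delta c_i=c(s_{i+1})-c(s_i)$ and $\Delta V_i=V_{i+1}-V_i$. For the cell $(i,j)$, \cref{eq:b-tangents} gives $\ell t_1^B=\Delta c_i+h_ze_z$ and $\ell t_2^B=-\Delta c_i+h_ze_z$. A row-independent variation therefore gives $\ell\,\delta t_1^B=\Delta V_i$ and $\ell\,\delta t_2^B=-\Delta V_i$, because the constant $c_ze_z$ cancels in differences. Linearizing $a_B=T_B^TT_B$, we need $\Delta V_i\cdot(\pm\Delta c_i+h_ze_z)=0$ for the diagonal entries. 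The off-diagonal entry gives $\delta(t_1^B\cdot t_2^B)\propto-2\,\Delta V_i\cdot\Delta c_i$. If $V_i$ lies in the plane of the section ($V_i\cdot e_z=0$), all three conditions reduce to the single discrete isometry condition
\[
\Delta V_i\cdot\Delta c_i=0\qquad\text{for all }i.
\]
For $\XM$ we use its definition directly. Cells in the same column $i$ have identical reference and varied frames, so the connection in the $z$-direction is the identity, $\theta^y\equiv0$ to first order, and $\theta^x$ depends only on $i$. Hence each plaquette moment $\mu_{ij}$ in \cref{eq:mixed-moment} vanishes identically, and $D\XM[v_h^C]=0$. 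So $v_h^C$ lies in the exact completed membrane kernel of \cref{thm:xm-minimal}, with the joint kernel evaluated at the cylindrical reference.

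\textbf{Step 2 (construction).} Let $n_i^h$ be the in-plane unit normal to the chord $\Delta c_i$; it is well defined by chord nondegeneracy. Set $V_0=u(s_0)e_\theta(s_0)+w(s_0)n(s_0)$ and $\Delta V_i=\varphi_in_i^h$, where
\[
\varphi_i=\big(\tilde v(s_{i+1})-\tilde v(s_i)\big)\cdot n_i^h,\qquad \tilde v=ue_\theta+wn .
\]
This satisfies the condition of Step 1 exactly.

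\textbf{Step 3 (local truncation).} The defect in each step is $\varepsilon_i=\big(\tilde v(s_{i+1})-\tilde v(s_i)\big)\cdot\tau_i^h$, where $\tau_i^h=\Delta c_i/|\Delta c_i|$. Differentiating $\tilde v$ with the convention $e_\theta'=-\kappa n$ gives $\tilde v'=(u'+\kappa w)e_\theta+(w'-\kappa u)n$. By the isometry hypothesis $u'+\kappa w=0$, so $\tilde v'\parallel n\perp c'$.

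Expand both $\tilde v(s_{i+1})-\tilde v(s_i)$ and $\Delta c_i$ about the midpoint $s_{i+1/2}$. Odd-order symmetry gives
\[
\tilde v(s_{i+1})-\tilde v(s_i)=h\tilde v'+\tfrac{h^3}{24}\tilde v'''+O(h^4),\qquad \Delta c_i=hc'+\tfrac{h^3}{24}c'''+O(h^4),
\]
with all derivatives evaluated at $s_{i+1/2}$. Since $\tilde v'\cdot c'=0$, it follows that $\varepsilon_i=O(h^3)$. This uses $c\in C^4$ and $u,w\in C^3$, and the constant depends on the curvature bounds. This local estimate is the step I expect to be the main obstacle. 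The main risk is that the error could degrade to $O(h^2)$ per step, for example through nonuniform $\kappa$ or through a mismatch between arc-length sampling and the chord parametrization. If that happens, I would first correct $\varphi_i$ with an explicit $O(h^2)$ midpoint term. Alternatively, I would bound $\varepsilon_i$ through the discrete identity $\Delta V_i\cdot\Delta c_i=0$ applied to the exact smooth isometry.

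\textbf{Step 4 (summation).} The nodal error is $e_i=I_hv(s_i)-V_i-c_ze_z$. Its increments are $e_{i+1}-e_i=\varepsilon_i\tau_i^h$, because the normal components match exactly by construction of $\varphi_i$. Hence $|e_{i+1}-e_i|\le Ch^3$. Summing over at most $L/h$ steps gives $\max_i|e_i|\le CLh^2$, and the error does not depend on $j$. So the discrete $L^2$ norm is at most $Ch^2$ times the root of the patch area.

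For the seminorm, $\abs{\cdot}_{1,h}$ involves differences divided by $h$, which gives $O(h^2)$ as well; the $j$-differences vanish. Therefore $h\abs{\cdot}_{1,h}\le Ch^3$, and \cref{eq:cylinder-error} follows.

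\textbf{Boundary conditions.} On a free patch nothing further is needed. For an essential trace, the hypothesis says the trace is compatible with $v_h^C$, so $v_h^C$ is admissible. The raw gauge plays no role, because every quantity above is built from B diagonal differences.
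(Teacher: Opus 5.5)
Your proposal is correct and follows the paper's proof essentially step for step. You build a row-independent field by projecting each sampled chord increment off the chord direction; your in-plane $n_i^h$ projection coincides with the paper's projection onto $\widehat d_i^\perp$, since $\tilde v$ lies in the section plane. You get $DX_M=0$ because generator-direction connections are trivial and circumferential ones are row-independent, and you obtain an $O(h^3)$ per-step defect from $\tilde v'\perp c'$, accumulated over $O(h^{-1})$ steps to an $O(h^2)$ nodal error with $O(h^3)$ first differences. One minor point: with only $u,w\in C^3$, your midpoint expansion of $\tilde v$ has an $O(h^3)$ remainder rather than $\tfrac{h^3}{24}\tilde v'''+O(h^4)$. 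That is all the estimate needs, so the hedging in Step 3 is unnecessary.
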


The shared-node B-metric recurrence first constructs a row-independent exact-kernel branch. Because the B-cell rotations in this branch are identical along the generator rows, the axial relative connection is the identity and the circumferential connection is independent of the row index. Both mixed connection differences in $X_M$ therefore vanish pointwise, without any additional parity assumption. An arc-length chord expansion then gives the second-order discrete $L^2$ and scaled $H^1$ errors; the full construction is given in Appendix~\ref{app:cylinder}. The theorem establishes only an $O(h^2)$ approximation property of the completed membrane kernel for the stated aligned, row-independent family of infinitesimal isometries. It is not a general solution-convergence theorem for arbitrary curved shells and does not imply uniform locking-free convergence.

The result extends the membrane-side design principle of Section~\ref{sec:completion-principle} from the flat kernel to a nontrivial curved-surface class. Removing $z_M$ does not eliminate the natural infinitesimal-isometry branch of the generalized-cylinder family, and the exact-kernel field approaches the continuous target at order $O(h^2)$. By contrast, treating a reconstructed W metric as a second full-strength membrane strain would impose additional membrane constraints on the same branch. The theorem therefore provides analytical evidence for curved-surface near-isometry representability, rather than a claim of locking-free behavior on arbitrary surfaces.

\subsection{Thin-Shell Modal Scaling and Fixed-\texorpdfstring{$h$}{h} Residual Locking}\label{sec:modal}

Removing exact blind modes does not automatically eliminate the ultra-thin fixed-$h$ locking limit. Load-bearing bending modes generally belong to a membrane \emph{near}-kernel rather than to the exact membrane kernel, and can therefore have small but nonzero membrane generalized eigenvalues at finite resolution. As thickness decreases, the $O(t)$ membrane contribution is progressively amplified relative to the $O(t^3)$ bending contribution and can suppress the physical bending response. The reduced tangent at the reference equilibrium retains the membrane--bending decomposition of \cref{eq:completion-scale}.

On the rigid/gauge-reduced load-bearing subspace, and for a mode on which the bending quadratic form is nonzero, consider the generalized eigenproblem
\begin{equation}
 K_m\phi_j=\lambda_jK_b\phi_j.
\end{equation}
Let the applied load be $t^3f_b$, and normalize the response by the pure-bending solution. Projection onto $\phi_j$ gives the denominator $t\lambda_j+t^3$, so the normalized modal response contains the factor
\begin{equation}\label{eq:attenuation}
 \frac{t^3}{t\lambda_j+t^3}=\frac{t^2}{\lambda_j+t^2}.
\end{equation}
At fixed $h$, any relevant $\lambda_j>0$ causes the response to attenuate as $t\to0$. If refinement drives the load-bearing near-kernel eigenvalues toward zero, the onset of locking is shifted to smaller thicknesses. The thickness--refinement map in \cref{fig:m3} is explained by this finite-dimensional relation, which also clarifies how refinement pushes the residual membrane effect into a thinner regime. Exact physical blind modes correspond instead to underconstrained $\lambda_j=0$ directions; residual locking arises from a load-bearing family that should approach an isometry but retains $\lambda_j>0$ at fixed $h$.

These analytical properties establish the basic mechanical envelope of the finite-configuration energy: it is invariant under changes of representation and rigid motions, it preserves the target near-isometric branch on a nontrivial curved-surface class, and it makes explicit why residual membrane stiffness can still control the fixed-$h$ ultra-thin limit. The numerical tests below examine how these properties manifest in actual discrete shell responses.
\section{Numerical Results}\label{sec:numerics}

The numerical study proceeds from local geometric consistency to the spectral effect of the completion terms, classical linear-shell responses, and finally finite-configuration problems. Smooth-surface tests first examine convergence of the W-centered second form. Nullity, positive spectra, and standard load-bearing responses are then used to assess whether $X_M$ acts only on the target membrane mechanism while preserving the normal soft space. The plate, Scordelis--Lo roof, and M3 shell successively probe flat bending, membrane--bending coupling on an initially curved surface, and locking-sensitive near-isometric response. Three nonlinear benchmarks further cover large rotations, finite deformation of curved shells, and localized ovalization.

All Checkerboard results use \cref{eq:membrane-energy,eq:bending-energy,eq:xw-energy}. Linear benchmarks and MidedgeTan use identical material parameters and reference values. Nonlinear cases are solved from the finite-configuration energy with a consistent tangent, Newton line search, and adaptive load stepping. Unless noted otherwise, errors are measured against analytical solutions or verified literature references. Conventional quad-like surfaces shown in the figures are gauge-fixed representatives of the midpoint equivalence class, or are drawn directly from the physical B faces. Such reconstruction is used only for visualization; energy, internal forces, and tangents are always evaluated on the physical midpoint/B/W geometry.

\subsection{Geometric Convergence of the Second Fundamental Form}\label{sec:geometry-results}

The W-centered second form must first reproduce the continuous second fundamental form on smooth surfaces under mesh refinement. We test a cylinder, a sphere, a twisted surface, and a skew surface, using Dellinger's Checkerboard second form as an independent reference from the same geometric lineage \cite{dellinger2024}. Prior to comparison, diagonal directions, scales, and tensor-component conventions are aligned. Under this alignment the two first-form observations are locally algebraically identical, while the second forms are built, respectively, from a vertex-centered Gauss image and from Varignon B-face normals.

Figures~\ref{fig:geometry-convergence}(a)--(d) show the discrete $L^2$ errors on the four surfaces. The B-face/W-centered construction converges robustly at approximately second order in every case. The two curves are nearly coincident for the twist and skew tests, while the cylinder and sphere exhibit the same asymptotic order. Thus, directly differentiating the normals of strictly planar B faces preserves the smooth-consistency expected of Checkerboard curvature discretizations while retaining the unified geometric carrier required in Section~\ref{sec:second-form-geometry}. The subsequent shell formulation can therefore analyze gauge, blind modes, and finite-configuration objectivity without switching to a separate curvature representation.

\begin{figure}[t]
 \centering
 \includegraphics[width=\textwidth]{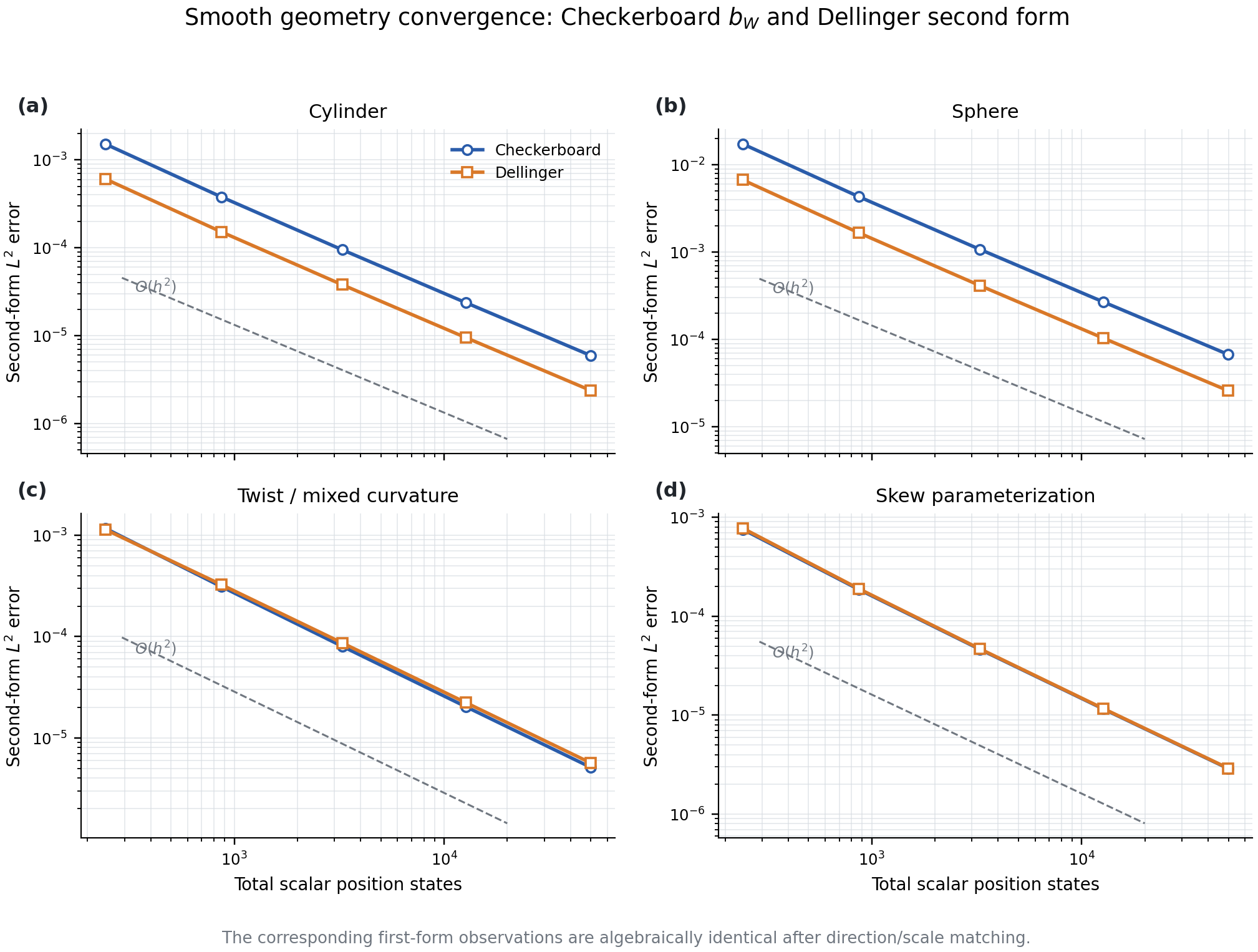}
 \caption{Discrete second-fundamental-form errors on four smooth surfaces. Panels (a)--(d) correspond to the cylinder, sphere, twist, and skew surfaces, respectively. The B-face/W-centered $b_W$ exhibits approximately second-order convergence in all four cases; an existing Checkerboard Gauss-image construction is included as an independent geometric reference. The horizontal axis is the total scalar DOF of the raw positional state.}
 \label{fig:geometry-convergence}
\end{figure}
\FloatBarrier

\subsection{Mechanism Verification of the Minimal Membrane Completion}\label{sec:mechanism-results}

Smooth geometric convergence does not reveal how strongly a completion term participates in the actual mechanical response. The minimal membrane completion should remove the unique physical membrane defect while leaving the original load-bearing soft space essentially unchanged. We therefore compare exact nullity, the active positive spectrum of M3, and standard load-bearing responses.

Figure~\ref{fig:mechanism}(a) shows the exact nullity of a flat patch. The B-only, $B+X_M$, and full B/W observations have nullities 6, 5, and 5, respectively, while the theoretical rigid-plus-gauge dimension is 5. Thus $X_M$ removes exactly one direction. Figure~\ref{fig:mechanism}(b) compares the M3 active positive spectrum. Apart from inserting stiffness in the removed direction, $B+X_M$ essentially preserves the smallest positive singular-value sequence of the B-side operator, whereas the full B/W observation shifts the same soft space upward by several orders of magnitude.

Figure~\ref{fig:mechanism}(c) shows the standard M3 load-bearing response. On the active spaces, the maximum relative displacement difference between $B$ and $B+X_M$ is $1.19\times10^{-10}$, and the maximum membrane-energy fraction carried by $X_M$ is $5.41\times10^{-12}$. On the fine meshes, the reference-relative $X_W^{\rm rel}$ changes the plate, Scordelis--Lo, and M3 displacements by 0.0136\%, 0.0210\%, and 0.2983\%, respectively. All three values are below $0.3\%$ and continue to decrease under refinement. Hence $X_W^{\rm rel}$ restores the missing curvature observability while introducing only a small, vanishing finite-mesh correction and does not become a dominant source of artificial bending stiffness. Together with rigid-motion objectivity and $O(h^2)$ smooth consistency, these results support the effectiveness and numerical non-intrusiveness of the reference-relative curvature completion in the linear shell problems considered here. Additional curved-patch nullity and sensitivity data are provided in Appendix~\ref{app:numerical}.

\begin{figure}[t]
 \centering
 \includegraphics[width=\textwidth]{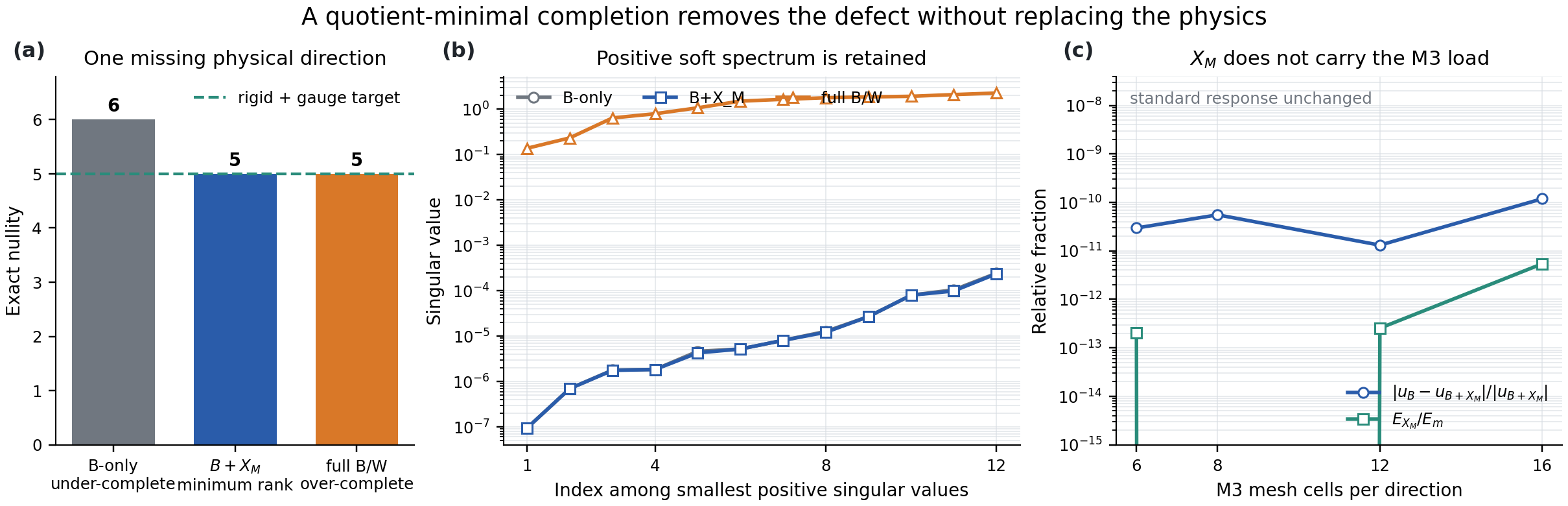}
 \caption{Spectral and load-response verification of the membrane compatibility completion. (a) Flat in-plane nullity: B-only has nullity 6, whereas adding $X_M$ reduces it to 5, exactly matching the rigid-plus-gauge dimension. (b) Smallest positive singular values on the $n=16$ M3 active space; $B+X_M$ preserves the low-end positive spectrum of the B-side operator. (c) M3 displacement difference between B-only and $B+X_M$, together with $E_{X_M}/E_m$.}
 \label{fig:mechanism}
\end{figure}
\FloatBarrier

\subsection{Linear Benchmarks and State Complexity}\label{sec:linear-benchmarks}

After the local action of the completion has been verified, standard load-bearing problems are used to assess the displacement response of the complete energy. The plate isolates pure bending on a flat surface and provides a basic flat-shell baseline without curvature-induced membrane locking. The Scordelis--Lo roof introduces initial curvature, distributed loading, rigid end diaphragms, and membrane--bending coupling within the same linear framework. Together, the two cases also compare the state complexity of the position-only formulation with MidedgeTan, which introduces independent edge states.

First consider a unit square simply supported plate with $E=1$, $\nu=0.3$, $t=0.01$, and uniform pressure $f=-10^{-8}e_z$ in the global $-z$ direction. Figure~\ref{fig:linear-benchmarks}(a) shows the simply supported condition on all four edges and the downward pressure. Figure~\ref{fig:linear-benchmarks}(b) overlays the medium-gray reference surface and the blue deformation amplified by a factor of 100, confirming the sign of the central displacement. The center deflection is reported as $w_c=-u_z(1/2,1/2)>0$, with Navier-series reference value $4.4360891\times10^{-4}$ \cite{timoshenkoWoinowskyKrieger1959}. In Fig.~\ref{fig:linear-benchmarks}(c), the Checkerboard errors for $n=8,16,32,64$ are 5.596\%, 1.488\%, 0.399\%, and 0.104\%, while the corresponding MidedgeTan errors are 4.953\%, 1.248\%, 0.313\%, and 0.078\%. Both methods converge steadily under refinement.

The Scordelis--Lo roof uses $R=25$, $L=50$, $t=0.25$, $E=4.32\times10^8$, $\nu=0$, and a uniform global $-z$ load of magnitude $q=90$. Figure~\ref{fig:linear-benchmarks}(d) shows the half-length domain, longitudinal symmetry, rigid end diaphragm, and downward loading; the standard observable is $-u_z$ at the free-edge midpoint. Figure~\ref{fig:linear-benchmarks}(e) overlays the medium-gray reference and the blue deformation amplified by a factor of 10 using the actual displacement $u=q-q_0$; the free edge moves in the global $-z$ direction. The classical reference used for the error plots is 0.3024 \cite{scordelisLo1964}, while modern computations give approximately 0.30059 \cite{sauerEtAl2024}. On the 12$\times$12 mesh, the free-edge displacement is $u_z=-0.33704$; local crown displacements may have the opposite sign without changing the sign of the standard free-edge observable. Figure~\ref{fig:linear-benchmarks}(f) gives Checkerboard errors of 11.45\%, 6.89\%, 3.66\%, and 2.54\% for $n=12,16,24,32$, respectively; the corresponding MidedgeTan errors are 9.10\%, 5.50\%, 2.20\%, and 0.856\%.

MidedgeTan introduces independent edge states, whereas Checkerboard retains only the raw positions. The discrete state counts are therefore approximately $3V+E$ and $3V$, respectively \cite{chenEtAl2026}. Figures~\ref{fig:linear-benchmarks}(c) and \ref{fig:linear-benchmarks}(f) plot error against total scalar state count, allowing accuracy and state size to be viewed on the same axis. Both formulations converge robustly, while Checkerboard reaches competitive plate and curved-shell displacement accuracy without edge or rotational variables. This state economy is a direct computational consequence of the unified position-only representation.

\begin{figure}[t]
 \centering
 \includegraphics[width=\textwidth]{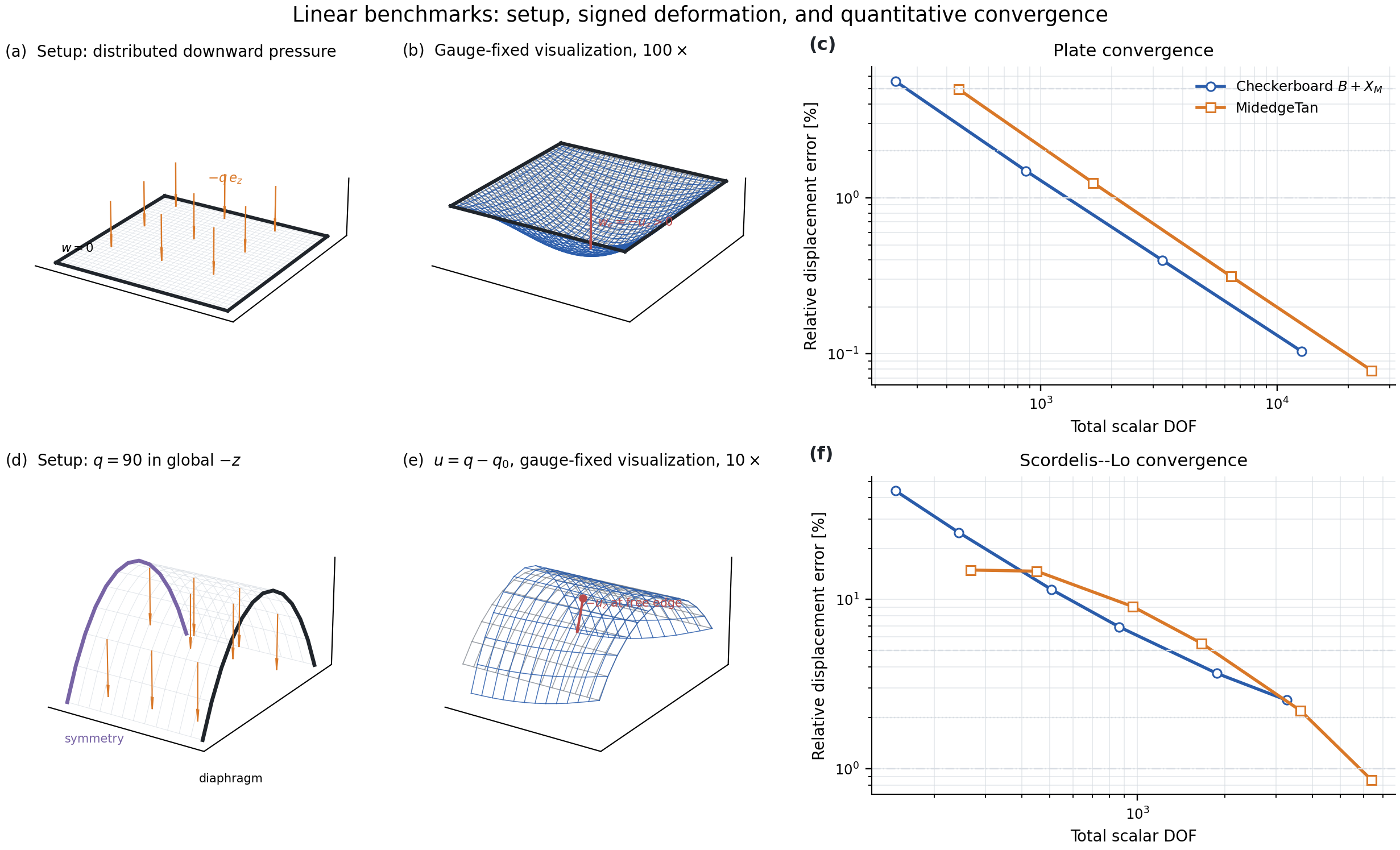}
 \caption{Setup, signed deformation, and quantitative convergence for the linear shell benchmarks. Top row: (a) simply supported plate with $w=0$ on the boundary and distributed global $-z$ pressure; (b) gray undeformed surface and gauge-fixed reconstructed downward deformation (100$\times$), with center observable $w_c=-u_z>0$; (c) error relative to the Navier solution versus total scalar DOF. Bottom row: (d) Scordelis--Lo half-length domain with longitudinal symmetry, a rigid end diaphragm, and multiple global $-z$ load arrows; (e) 10$\times$ deformation plotted from $u=q-q_0$ with free-edge observable $-u_z$; (f) error relative to 0.3024 versus total scalar DOF. Reconstruction is used only for visualization; the mechanics is evaluated on the physical midpoint/B-face geometry.}
 \label{fig:linear-benchmarks}
\end{figure}
\FloatBarrier

\subsection{M3 Benchmark and Thickness--Mesh Coupling}\label{sec:m3}

M3 is a classical locking-sensitive curved-shell benchmark. Its quarter hemisphere, cutout, and alternating forces generate a response dominated by near-isometric ovalization and provide a linearized reference-state, standard-thickness, and thickness-sensitive baseline. The same classical geometry is later used in the nonlinear hemisphere problem to extend the test from a linearized standard load to a finite-configuration equilibrium path.

The MacNeal--Harder M3 problem is a quarter hemisphere with an $18^\circ$ cutout, $R=10$, $E=6.825\times10^7$, $\nu=0.3$, and standard thickness $t=0.04$. In Fig.~\ref{fig:m3}(a), physical point $A=(R,0,0)$ is loaded outward in the global $+x$ direction and $B=(0,R,0)$ inward in the global $-y$ direction; the two radial edges satisfy reflection symmetry. The B-point observable is $-u_B=-\Delta y_B$. Classical benchmark studies place the reference response in the $0.093$--$0.094$ range \cite{macnealHarder1985,szeEtAl2004,neunteufelSchoberl2019}; the convergence plots and error values below use the fixed value 0.09352155 consistently throughout the benchmark suite. Figure~\ref{fig:m3}(b) shows only the medium-gray reference and the blue deformation amplified by a factor of 18; the blue B end moves into the reference sphere. Load directions and observables are confined to Fig.~\ref{fig:m3}(a). Figure~\ref{fig:m3}(c) gives the standard-thickness refinement: Checkerboard displacements for $n=6,8,12,16$ are 0.05403, 0.07870, 0.09146, and 0.09342, corresponding to errors of 42.23\%, 15.85\%, 2.20\%, and 0.1105\%. The refinement sequence rapidly approaches the standard reference.

Figure~\ref{fig:m3}(d) summarizes the thickness--refinement error map, while Fig.~\ref{fig:m3}(e) isolates the fixed-$h$ normalized-response tail. When the load is scaled with $t^3$, the normalized displacement at fixed $h$ still attenuates for sufficiently small $t$; refinement from $n=6$ to 16 shifts the onset to a thinner regime. This trend is consistent with the modal relation in \cref{eq:attenuation}: maintaining an accurate response deep in the thin-shell regime requires mesh and thickness to be refined together. $X_M$ removes the identified exact checkerboard membrane mechanism, while the accuracy of higher-order curved near-isometry approximation controls the residual ultra-thin response.

\begin{figure}[t]
 \centering
 \includegraphics[width=\textwidth]{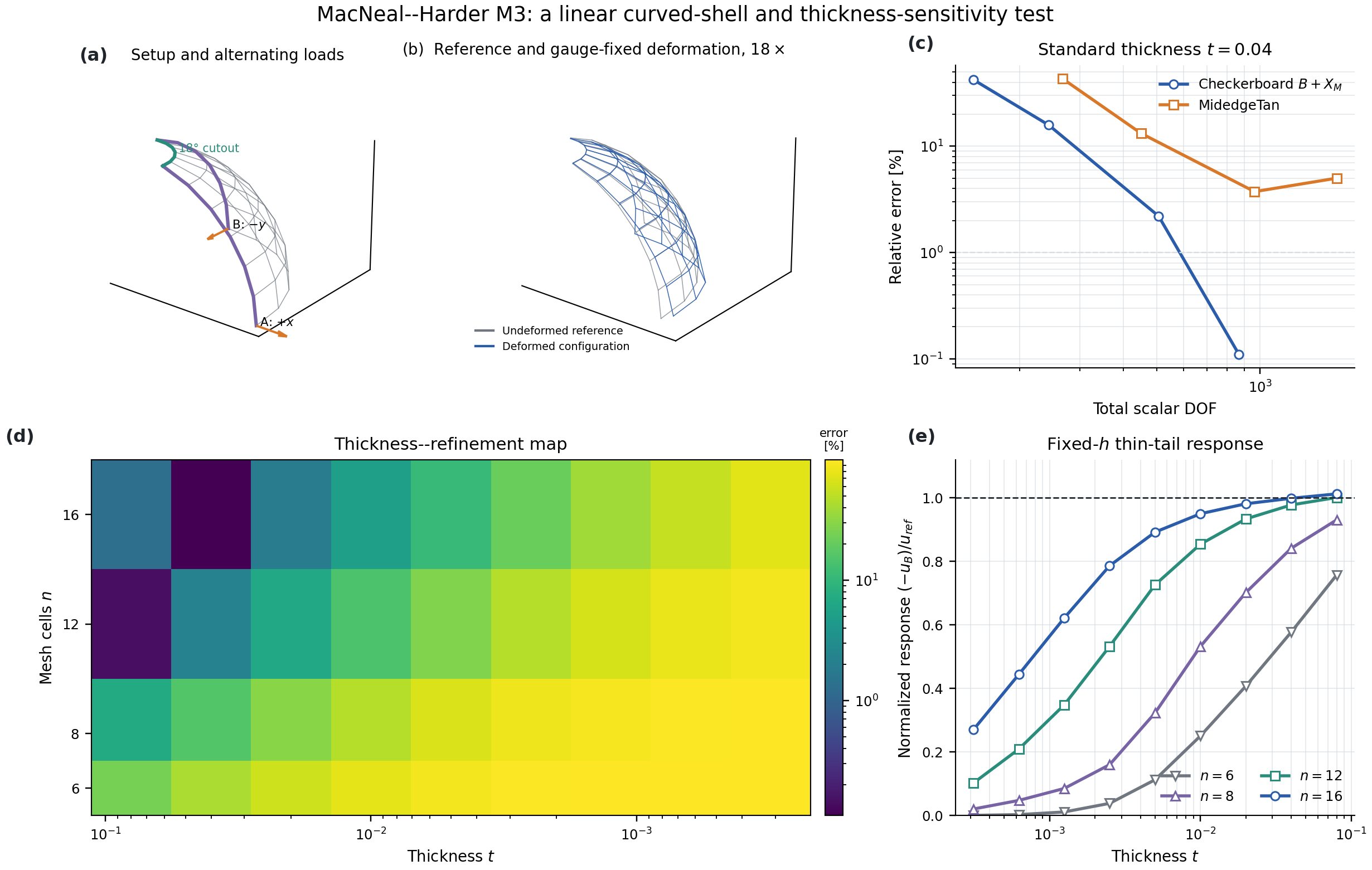}
 \caption{MacNeal--Harder M3 as a linear curved-shell and thickness-sensitivity benchmark. (a) $A=(R,0,0)$ carries a $+x$ outward load and $B=(0,R,0)$ a $-y$ inward load; the quarter hemisphere, $18^\circ$ cutout, and symmetry edges are also shown. (b) Medium-gray undeformed surface and blue gauge-fixed reconstructed deformation amplified by 18; the B end moves toward the sphere interior and the reported observable is $-u_B$. (c) Error at $t=0.04$ relative to the standard reference versus total scalar DOF. (d) Thickness--refinement error map. (e) Fixed-$h$ normalized response. Reconstruction is used only for visualization; mechanical quantities are evaluated on the physical Checkerboard geometry.}
 \label{fig:m3}
\end{figure}

\subsection{Geometrically Nonlinear Benchmarks}\label{sec:nonlinear}

The linear benchmarks probe only the reference tangent. Finite configurations additionally test the current frame, normal transport, and large-rotation updates. The three nonlinear cases are chosen to separate progressively more complex deformation mechanisms: the cantilever isolates extreme rotation under nearly pure bending; the hemisphere carries the classical M3 geometry into a finite-deformation full load path; and the open cylinder adds a localized point load, strong curvature gradients, and open-section ovalization. Together they cover large rotation, nonlinear membrane--bending coupling on a curved reference, and localized strong-curvature response.

All three nonlinear problems recover the reference tangent response as $\lambda\to0$ and satisfy component-energy objectivity and internal-residual covariance at representative finite-deformation states. Every load is derived from an explicit external potential. The raw gauge is removed by quotient-based reduction rather than a penalty. The cantilever, hemisphere, and open cylinder are all solved with adaptive load-control Newton iterations and require neither arc-length continuation nor benchmark-specific tuning.

\subsubsection{End-Moment Cantilever}\label{sec:cantilever}

The rectangular cantilever has $E=1.2\times10^6$, $\nu=0$, $L=12$, width $=1$, and $t=0.1$, with maximum end moment
\begin{equation}
 M_{\max}=\frac{2\pi EI}{L}=\frac{50\pi}{3}.
\end{equation}
Because the position-only state has no independent rotational DOF, the end moment is applied through a continuous unwrapped rotation potential of a gauge-invariant physical-midpoint end frame. The resulting positional forces have zero resultant and the prescribed resultant moment. The analytical reference is the constant-curvature circular arc \cite{szeEtAl2004,neunteufelSchoberl2019}.

Figure~\ref{fig:nonlinear-ch}(a) marks the applied end couple with a large-radius thin circular arrow; the positive sign rotates the end tangent from $+x$ toward $+z$. Meshes 16$\times$4, 24$\times$4, and 32$\times$4 all pass continuously through $90^\circ$ and $180^\circ$ and reach full-circle-level bending. Figure~\ref{fig:nonlinear-ch}(b) shows centerline configurations at 25/50/75/100\% load. Figure~\ref{fig:nonlinear-ch}(c) compares the complete tip path with the constant-curvature solution, and Fig.~\ref{fig:nonlinear-ch}(d) shows RMS and maximum path-error refinement. The fine mesh reaches a final turning angle of $359.841^\circ$, with RMS and maximum path errors of 2.442\% and 2.792\%, respectively. The final membrane-energy fraction is 0.432\% and decreases under refinement; the $X_M$ and $X_W^{\rm rel}$ energies are approximately $4.6\times10^{-64}$ and $1.8\times10^{-30}$, respectively, and are numerically inactive on this pure-bending branch. This case therefore probes finite rotation, objectivity, and a position-only applied couple, but not curved-reference transport in isolation.

\subsubsection{Alternating-Force Hemisphere}\label{sec:nonlinear-hemisphere}

This problem uses the same $R=10$, $t=0.04$, $E=6.825\times10^7$, $\nu=0.3$, and $18^\circ$ cutout as M3, but solves the full finite-configuration equilibrium path. In the full model, four alternating radial forces each reach magnitude 100; in the quarter model, each symmetry point carries half of the corresponding dead-load resultant. The two radial edges satisfy reflection symmetry. The observables are $u_A=\Delta x_A$ and $-u_B=-\Delta y_B$; the local notation used in the literature satisfies $V_A\equiv u_A$ and $-U_B\equiv-u_B$. Figure~\ref{fig:nonlinear-ch}(e) shows the medium-gray reference, blue final configuration, and restrained orange load arrows. Point A moves outward from the sphere and B moves inward. Meshes $n=6,8,12,16$ all complete the full path $P/P_{\max}\in[0,1]$. The reference path is digitized from the published load--deflection curves \cite{szeEtAl2004}; the final B value has an independent tabulated cross-check of 3.8796 \cite{neunteufelSchoberl2019}. The digitization uncertainty in displacement is approximately 0.03.

Figure~\ref{fig:nonlinear-ch}(f) gives the four-mesh paths of $u_A$, and Fig.~\ref{fig:nonlinear-ch}(g) the corresponding paths of $-u_B$. On the $n=16$ mesh, the two final errors are 2.759\% and 0.149\%, the joint RMS path error is 1.736\%, and the maximum normalized path error is 1.859\%. Both complete curves and the near-terminal data approach the same reference branch under refinement. The maximum $X_M/E_{\rm int}$ is $1.62\times10^{-7}$, while the maximum $X_W^{\rm rel}/E_{\rm int}$ is 0.322\%; the latter decreases from 1.552\% at $n=6$. At all representative load levels, the soft tangent modes correspond to physical ovalization/pinching rather than lattice-scale checkerboard patterns. This problem therefore simultaneously tests curved-reference transport, nonlinear membrane--bending coupling, and a complete load path.

\begin{figure}[t]
 \centering
 \includegraphics[width=\textwidth]{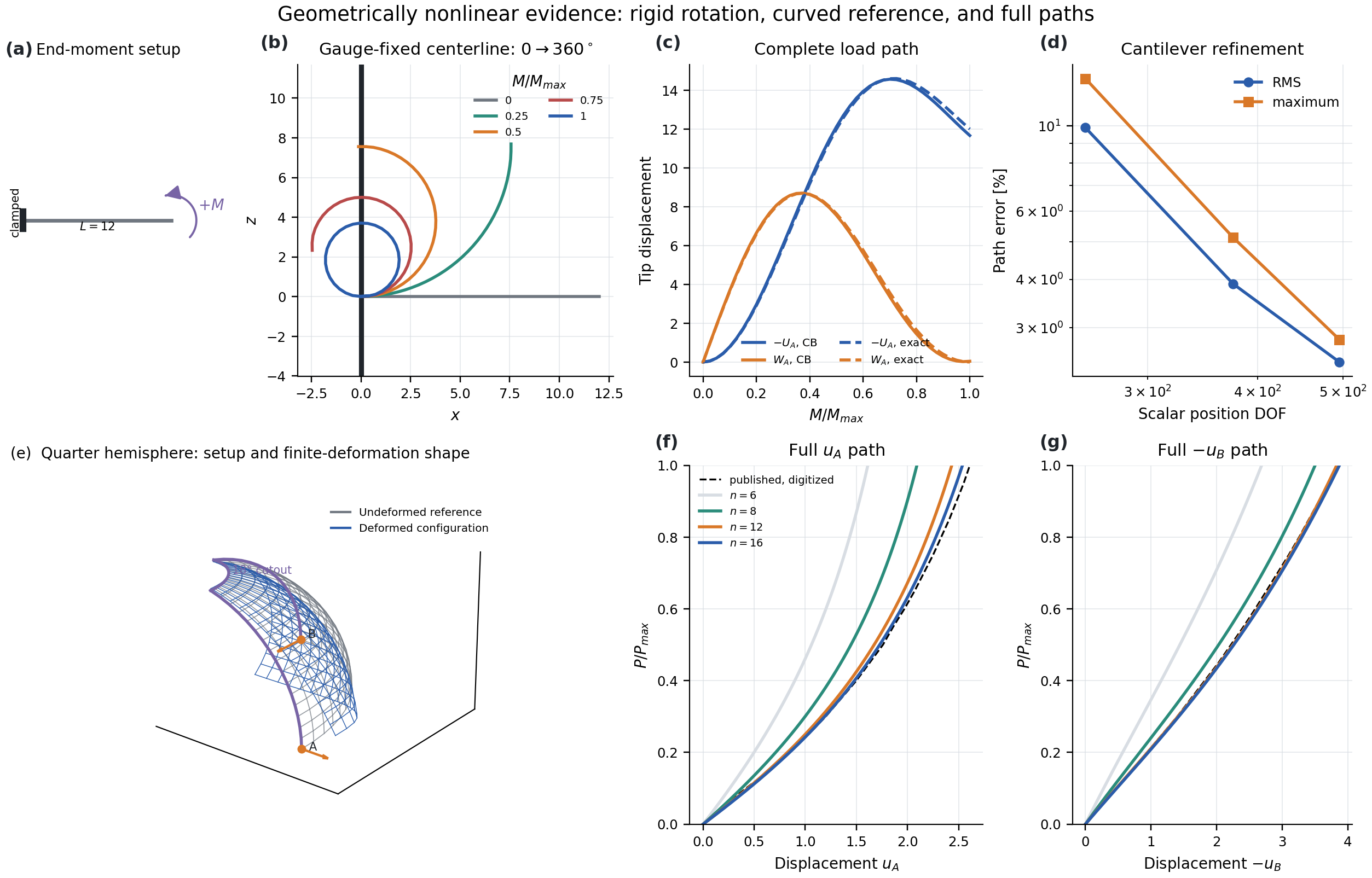}
 \caption{Geometrically nonlinear validation I. Top row, end-moment cantilever: (a) clamp and positive end couple rotating the tangent from $+x$ toward $+z$; (b) gauge-fixed centerline sequence at $M/M_{\max}=0,0.25,0.5,0.75,1$; (c) complete tip $-U_A/W_A$ path and the exact constant-curvature solution; (d) RMS and maximum path-error refinement. Bottom row, nonlinear hemisphere: (e) $+x$ outward load at $A=(R,0,0)$ and $-y$ inward load at $B=(0,R,0)$, together with the medium-gray reference and blue final geometry; A moves outward and B inward, and the orange arrows indicate external loads only. (f,g) Full $u_A$ and $-u_B$ paths for meshes 6/8/12/16; dashed curves are digitized references in the literature notation $V_A,-U_B$. Reconstruction is used only for visualization; mechanical quantities are evaluated on the physical Checkerboard geometry.}
 \label{fig:nonlinear-ch}
\end{figure}
\FloatBarrier

\subsubsection{Open-Ended Cylinder Pull-Out}\label{sec:open-cylinder}

The first two nonlinear examples are dominated by global large rotation and symmetric curved-surface deformation. They do not simultaneously impose a localized point load, a free open edge, and a strong curvature gradient. The open-ended cylinder pull-out benchmark adds precisely these features. A radial force pulls the midspan section outward and strongly ovalizes the shell, while the response at the axially free end reverses after an intermediate load level. The test therefore probes whether both structured directions transmit localized deformation coherently and whether lattice-scale checkerboard artifacts appear under strong local curvature.

We use the benchmark of Sze, Liu, and Lo \cite{szeEtAl2004}, cross-checked against rotation-free IGA, solid-shell, and IGA reproductions \cite{duongEtAl2017,wangEtAl2017,nguyenEtAl2015}:
\begin{equation}
 L=10.35,\quad R=4.953,\quad t=0.094,\quad
 E=10.5\times10^6,\quad \nu=0.3125.
\end{equation}
The one-eighth domain is $x\in[0,L/2]$, $\theta\in[0,\pi/2]$, with parametrization $X=(x,R\cos\theta,R\sin\theta)$. In Fig.~\ref{fig:open-cylinder}(a), a small inset highlights the computational domain on a translucent full cylinder, while the main setup isolates the three reflection boundaries $x=0$, $\theta=0$, and $\theta=\pi/2$, together with the traction- and moment-free open end at $x=L/2$. In the full model, each of the two opposite radial dead forces reaches magnitude 40000; the reduced domain carries $P/4=10000$ in the $+z$ direction at $A=(0,0,R)$. The plotted measurement directions match the definitions used in the text: $w_A=\Delta z_A$, $-u_B=-\Delta y_B$ at the midspan point $B=(0,R,0)$, and $-u_C=-\Delta y_C$ at the open-end point $C=(L/2,R,0)$. The A/B/C reference paths are digitized from literature vector graphics and are consistently identified as digitized references.

Meshes 8$\times$8, 12$\times$12, and 16$\times$16 all reach $P/P_{\max}=1$ using the same adaptive Newton/load-control strategy, requiring 45, 45, and 46 accepted load steps, respectively, with no arc-length continuation. Figure~\ref{fig:open-cylinder}(b) shows the full A/B/C paths on the 16$\times$16 mesh. The fine-mesh final errors are 8.094\%, 5.316\%, and 1.861\%, with corresponding RMS path errors of 7.075\%, 4.366\%, and 2.319\%. Figure~\ref{fig:open-cylinder}(c) enlarges the reversal at point C. The combined RMS error decreases from 13.648\% to 7.340\% and 4.483\%, and all three meshes capture the displacement reversal near $P/P_{\max}\simeq0.5$, with the peak moving toward the reference path under refinement.

Along the full path, the maximum meaningful $E_{X_M}/E_{\rm int}$ is 0.001236\%, and the maximum $E_{X_W}/E_{\rm int}$ is 0.202622\%. No normal flips, orientation reversals, or degenerate B/W sites occur. The softest tangent modes are classified as physical ovalization modes, and their alternating scores decrease under refinement. Figure~\ref{fig:open-cylinder}(d) overlays the medium-gray reference and the colored final geometry, showing axial/circumferential bending together with ovalization of the open and midspan sections, without local alternating folds. This benchmark therefore provides evidence for localized nonlinear response while retaining an explicit finite-mesh error assessment on the 16$\times$16 grid.

\begin{figure}[t]
 \centering
 \includegraphics[width=\textwidth]{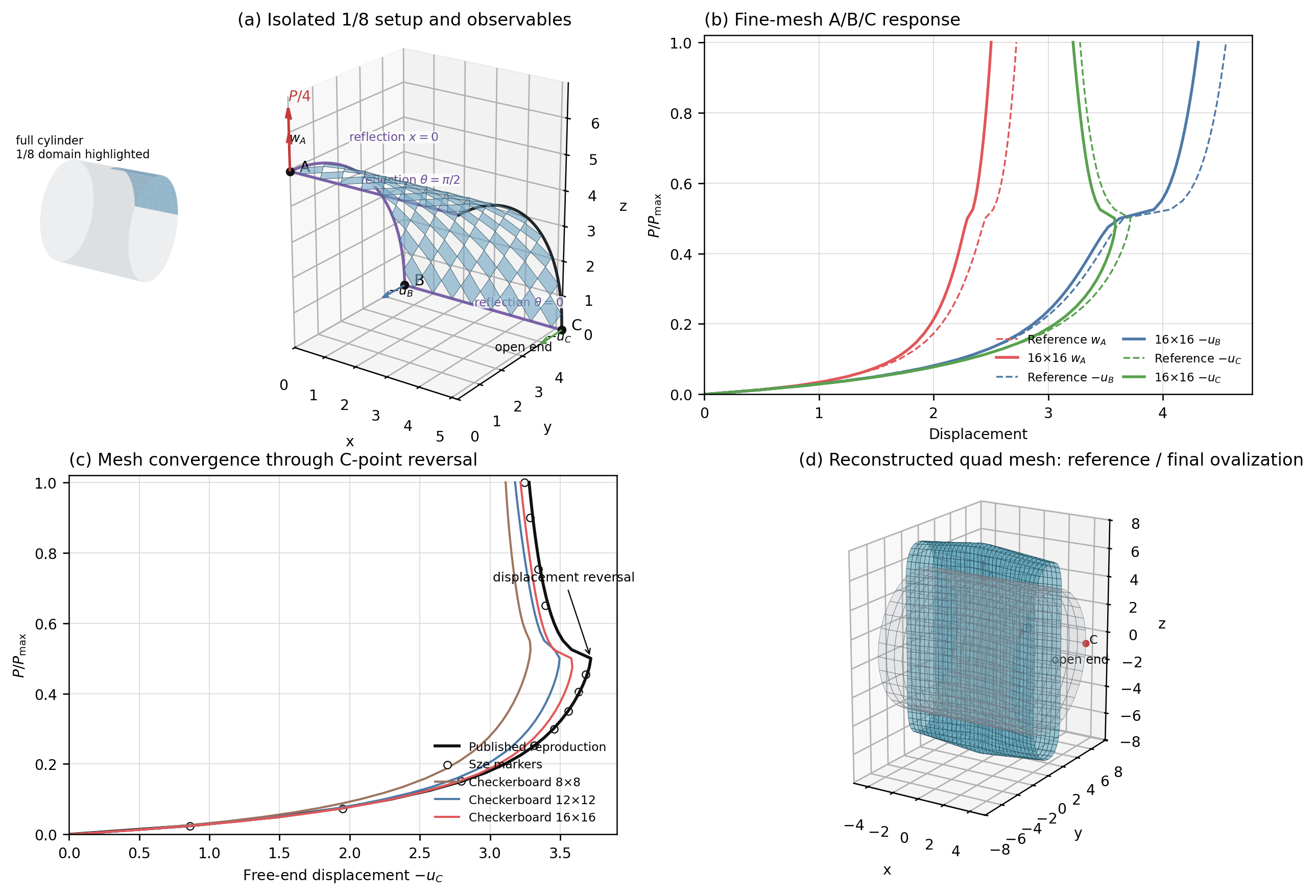}
 \caption{Geometrically nonlinear validation II: open-ended cylindrical-shell pull-out. (a) A small inset highlights the one-eighth computational domain on a translucent full cylinder; the main setup isolates the three reflection boundaries, the free open end, the reduced dead load $P/4$, and the measurement directions $w_A,-u_B,-u_C$. (b) Complete A/B/C paths on the 16$\times$16 mesh and the digitized references. (c) Refinement of the displacement reversal at point C for 8$\times$8, 12$\times$12, and 16$\times$16 meshes. (d) Medium-gray undeformed cylinder and fine-mesh final ovalization mirrored from the physical B-face geometry, with load point and open end indicated.}
 \label{fig:open-cylinder}
\end{figure}
\FloatBarrier

Across the linear and finite-deformation tests, the same B/W midpoint geometry, positional state, and geometric update remain in use. The resulting benefits therefore extend beyond representation to the organization of membrane--bending constraints and to finite-configuration solution procedures.
\section{Discussion}\label{sec:discussion}

\subsection{Advantages of Checkerboard Geometry for Shell Discretization}

The Varignon midpoint surface first provides a uniquely defined local geometry. A raw quadrilateral may become strongly warped, yet its B face remains exactly planar, and its tangent frame, normal, and first fundamental form are obtained directly from nodal positions. The W-centered second form compares only neighboring B-face normals, so membrane and bending share the same physical surface. In contrast to shell formulations that carry independent directors, rotations, or edge states, no auxiliary rotational variables must be updated or kept consistent during finite rotation; the state remains $3V$ positional scalars throughout.

This compact state translates into a measurable accuracy-per-state advantage in the standard benchmarks. For the simply supported plate at $n=16$, Checkerboard uses 867 positional scalars to obtain an error of 1.488\%, whereas MidedgeTan uses 1667 scalar states on the same mesh and gives an error of 1.248\%. The two errors are already comparable while the state size is nearly halved. In the more demanding M3 hemisphere, Checkerboard reaches 2.20\% error with 507 scalar states at $n=12$, whereas MidedgeTan uses 963 states on the same mesh and gives 3.74\% error. The Scordelis--Lo roof likewise shows stable convergence while retaining roughly half as many scalar states. Position-only kinematics therefore do more than reduce the number of variable types: on the plate and curved-shell problems considered here, they deliver a high accuracy per scalar state. The M3 result further shows that this compactness does not require sacrificing the low-order flexibility needed for a locking-sensitive near-isometric response.

The quotient construction and targeted completion preserve this economy. The midpoint quotient removes a raw gauge that leaves the physical midsurface unchanged, rather than fixing redundant directions through material penalties. $X_M$ adds only a rank-one membrane channel, and its energy contribution in the standard M3 response is essentially at machine precision. The fine-mesh displacement shifts induced by $X_W^{\rm rel}$ remain below $0.3\%$ in the plate, Scordelis--Lo, and M3 tests and decay under refinement. The missing curvature information can therefore be restored without materially altering the normal bending response. In this sense, the added compatibility observations act primarily on directions genuinely absent from the Checkerboard lattice rather than stabilizing the full space through broad stiffening.

The finite-deformation results reveal another advantage of the unified position-only geometry. The end-moment cantilever reaches a turning angle of $359.841^\circ$ continuously, with a fine-mesh tip-path RMS error of 2.442\%. The nonlinear hemisphere has a joint RMS path error of 1.736\% while preserving the classical outward motion at A and inward motion at B. The 16$\times$16 open-cylinder model captures the full ovalization pattern and displacement reversal. In all three problems the geometry is updated directly from the current positions, without switching kinematic descriptions for large rotation. Exact local planarity, a shared membrane--bending carrier, a low scalar-state count, and low-rank completion of the identified blind modes together form the principal numerical advantages of the Checkerboard shell discretization.

\subsection{Structured Implementation and Extensions}

A structured quadrilateral Checkerboard grid expresses the B/W staggering, the $M$ character, and the boundary moment reconstruction through fixed stencils. This regularity is an important reason the formulation can retain a compact positional state and explicit geometric structure. Extensions to multi-patch or general quadrilateral topologies can preserve the central construction---the midpoint surface as the physical carrier and neighboring planar-patch normals as the bending descriptor---while redefining local B/W coupling at patch interfaces, extraordinary vertices, and irregular neighborhoods. Such extensions modify the stencil and topological organization rather than the underlying geometric modeling principle.

The thickness--mesh tests show that refinement moves the residual membrane influence of curved near-isometries into progressively thinner regimes. Further improvement for extremely thin shells can therefore build on the existing blind-mode completion through higher-order near-isometry approximation and more local membrane observations. Likewise, the global rank-one structure of $X_M$ suggests equivalent localized representations suitable for domain decomposition or adaptive computation, while the surface-polar transport of $X_W^{\rm rel}$ can be extended naturally to multi-patch interfaces and finite-deformation contact. The common principle is to derive geometry from the physical midpoint surface and to add constraints only where a specific loss of variational information has been identified.

\clearpage
\section{Conclusions}\label{sec:conclusion}

Checkerboard edge-midpoint geometry provides a natural local carrier for position-only thin-shell discretization. Once the connected edge-midpoint surface is taken as the physical discrete midsurface, the strictly planar Varignon B faces directly define the tangent frame, normal, and first fundamental form, while variations of neighboring B-face normals define the second fundamental form at staggered W sites. Membrane and bending geometry are therefore determined, for arbitrary finite configurations, by the same nodal positions and the same midpoint surface, without additional director or rotational degrees of freedom.

Geometric consistency alone is not sufficient for a reliable shell energy; the discrete fundamental forms must also observe genuine physical deformation. The raw nodal representation has an exact checkerboard gauge for a fixed midpoint surface. In the physical quotient space, the B metric and symmetric W curvature additionally miss one membrane direction and two curvature directions, respectively. Identifying these three blind modes makes it possible to design the completion directly around the missing information. $X_M$ restores the unique membrane defect through a rank-one $M$-character observation and uses $\beta_M=8\mu$ to match the isotropic trace-free shear channel. $X_W^{\mathrm{rel}}$ restores the two curvature defects through an alternating B-normal component and surface-polar transport, with coefficient $\beta_W^0=\mu\gamma_W^0$ obtained by exact integration of an auxiliary-Q1 normal-gradient energy. The two terms enter at the $O(t)$ membrane and $O(t^3)$ bending scales, respectively, yielding a finite-configuration Koiter-type position-only energy.

The analytical and numerical results support the construction from complementary directions. Quotient and lattice-kernel analyses rigorously separate representation redundancy from physical blind modes; objectivity and reference-state analysis ensure that neither rigid motions nor gauge changes create artificial energy; and the $O(h^2)$ near-isometry approximation on aligned generalized cylinders shows that the minimal membrane completion preserves a natural curved-shell bending branch. The second-form tests on four smooth surfaces exhibit second-order geometric convergence. Mechanism tests show that $X_M$ is essentially inactive on normal load-bearing response and that $X_W^{\mathrm{rel}}$ introduces only a small finite-mesh correction that decays under refinement. The plate, Scordelis--Lo, M3, and three finite-deformation benchmarks span flat bending, membrane--bending coupling under initial curvature, thickness-sensitive response, large rotation, nonlinear pinching, and localized ovalization.

The central outcome is therefore not merely another discrete curvature measure, but a complete route from Checkerboard geometry to thin-shell mechanics: the midpoint surface defines the physical state, the quotient removes representation redundancy, variational observability of the fundamental forms identifies genuine defects, and scale-compatible completion restores precisely the missing information. This framework preserves the strictly planar local patches and purely positional state of Checkerboard geometry while unifying discrete geometry, variational observability, and finite-deformation shell energy on the same midpoint surface. Checkerboard geometry thereby becomes not only a design representation, but also a direct position-only discretization for membrane--bending coupled and large-rotation shell mechanics.

\clearpage

\section*{Appendix}
\addcontentsline{toc}{section}{Appendix}
\appendix
\section{Midpoint Quotient and Gauge Identities}\label{app:quotient}

This appendix supplies the representation-level proof omitted from Section~\ref{sec:quotient} and clarifies the distinction between the compatible midpoint image and an arbitrary edge field. Let $V$ and $E$ denote the vertex and edge sets of the raw grid. For a scalar raw field $x\in\R^{|V|}$, define the undirected edge-sum operator, without the factor one-half, by
\begin{equation}
 (Sx)_{uv}=x_u+x_v,\qquad (u,v)\in E.
\end{equation}
The three-dimensional midpoint map can then be written as $P=\tfrac12 S\otimes I_3$. The statements below require only that the raw graph be connected and bipartite; rectangular regularity is not needed.

\begin{lemma}[Scalar edge-sum kernel]\label{lem:edge-sum}
If $G=(V,E)$ is connected and bipartite, with bipartite coloring $p_v\in\{+1,-1\}$, then
\begin{equation}
 \ker S=\operatorname{span}\{p\}.
\end{equation}
\end{lemma}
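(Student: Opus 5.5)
The argument proves the two inclusions separately, using only connectivity and bipartiteness. For $\operatorname{span}\{p\}\subseteq\ker S$: since $G$ is bipartite with coloring $p_v\in\{+1,-1\}$, every edge $(u,v)\in E$ joins vertices of opposite color, so $p_u=-p_v$. Hence $(Sp)_{uv}=p_u+p_v=0$ for every edge, so $p\in\ker S$. By linearity, every scalar multiple of $p$ also lies in $\ker S$.

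For $\ker S\subseteq\operatorname{span}\{p\}$, take $x\in\ker S$ and set $y_v=p_vx_v$. Because $p_v^2=1$, the condition $x_u+x_v=0$ on an edge becomes $p_uy_u+p_vy_v=0$. Multiplying by $p_u$ and using $p_up_v=-1$ gives $y_u-y_v=0$. So $y$ takes equal values at the two ends of every edge.

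Now use connectivity. Fix a base vertex $v_0$. For any $v\in V$ there is a path $v_0=w_0,w_1,\dots,w_k=v$ in $G$. Applying the edge identity along the path gives $y_v=y_{v_0}=:c$ for every $v$, so $y$ is constant. Therefore $x_v=p_vy_v=c\,p_v$, that is, $x=c\,p\in\operatorname{span}\{p\}$.

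Together the two inclusions give $\ker S=\operatorname{span}\{p\}$. Since $p\neq0$, this kernel is one-dimensional.

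The only nontrivial step is the change of variables $y=p\odot x$, which turns the edge-sum condition into an edge-difference condition. After that, the argument is the standard fact that the kernel of the graph incidence (difference) operator on a connected graph consists of the constants.

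The signed-vertex form of this argument reappears where the lemma is used. For \cref{thm:gauge}, apply it componentwise via $P=\tfrac12 S\otimes I_3$; this gives $\ker P=\{p\otimes A : A\in\R^3\}$, which is exactly \cref{eq:gauge-kernel}. The isomorphism \cref{eq:quotient} then follows from the first isomorphism theorem applied to the linear map $P$.
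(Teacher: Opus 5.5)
Your proof is correct and follows essentially the same route as the paper: you propagate the edge relation along paths from a root vertex using connectivity, and you check $S(cp)=0$ directly for the converse. The only difference is cosmetic. Your substitution $y=p\odot x$ lets the given coloring absorb the signs, so you do not need the paper's separate remark that path parity is well defined because a bipartite graph has no odd cycle.
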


\begin{proof}
Take any $x\in\ker S$. On every edge $(u,v)$, $x_v=-x_u$. Fix a root vertex $v_0$. Propagating along a path of length $k$ gives $x_v=(-1)^kx_{v_0}$. In a bipartite graph, any two paths from $v_0$ to $v$ have the same parity; otherwise their concatenation would contain an odd cycle. Hence the value is path independent. Connectivity implies that all vertex values are determined by the single scalar $x_{v_0}$, with signs given by the bipartite coloring. Conversely, substitution directly gives $S(cp)=0$.
\end{proof}

Applying \cref{lem:edge-sum} independently to the three Cartesian components gives $\ker P=\{pA:A\in\R^3\}$ in \cref{thm:gauge}. The first isomorphism theorem then yields
\begin{equation}
 \Qraw/\ker P\simeq\im P.
\end{equation}
The right-hand side must be understood as the \emph{compatible midpoint image}. An arbitrary assignment of edge vectors does not generally lie in the image of $P$, because alternating edge sums around every even cycle must satisfy a closure relation. This is why raw positions are convenient parameters for a physical midpoint surface: midpoint compatibility is satisfied automatically, while only a fixed three-dimensional gauge remains.

The roles of the physical boundary operator $C_{\rm phys}$ and the gauge slice $C_g$ should be kept distinct. The former imposes $C_{\rm phys}Pq=c$, or an equivalent midpoint extrapolation condition, whereas the latter selects a single representative from the affine class $q+pA$. If the columns of $T$ span
\begin{equation}
 \ker\begin{bmatrix}C_{\rm phys}P\\C_g\end{bmatrix},
\end{equation}
then the reduced solve $q=q_b+Tz$ satisfies both the physical boundary conditions and the uniqueness of the raw representative. Changing the admissible $C_g$ modifies only the raw display, not $Pq$, the internal energy, the external work, or any gauge-invariant observable. The energy and residual remain unchanged under a finite-amplitude perturbation $pA$.
\section{Flat First- and Second-Form Kernel Proofs}\label{app:kernels}

\subsection{In-Plane Quotient Defect of the B Metric}

On a unit regular flat grid, choose equivalent local coordinates in which the B diagonal frame is $T_B^0=I_2$. For an in-plane raw displacement $v_{ij}\in\R^2$, the increment of the B tangents can be written as a $2\times2$ matrix $F_{ij}$, and
\begin{equation}
 D a_B[v]=F_{ij}+F_{ij}^T.
\end{equation}
Hence $Da_B[v]=0$ if and only if $F_{ij}=\omega_{ij}J$, where $J$ is the planar $90^\circ$ rotation. Shared raw nodes require the diagonal differences of neighboring B frames to arise from the same displacement field. Eliminating the nodal values gives the cell-microrotation recurrence
\begin{equation}
 \omega_{i+1,j+1}-\omega_{ij}=0,\qquad
 \omega_{i+1,j}-\omega_{i,j+1}=0.
\end{equation}
The first equation makes $\omega$ constant along one diagonal direction, while the second couples the values on the other diagonal by parity. On a connected rectangular patch, the general solution is
\begin{equation}
 \omega_{ij}=c_0+c_M(-1)^{i+j}.
\end{equation}
The $c_0$ component integrates to an ordinary rigid yaw. For $c_M=1$, choosing the representative with zero translation gives
\begin{equation}
 z_M(i,j)=h(-1)^{i+j}(i,-j).
\end{equation}
This is not a raw gauge. For example, the horizontal-edge midpoint variation contains
\begin{equation}
 \frac12[z_M(i,j)+z_M(i+1,j)]
 =-\frac h2(-1)^{i+j}(1,0),
\end{equation}
which is generically nonzero; nor can $z_M$ be represented by a single affine rigid field. Adding the two translations, rigid yaw, the two-dimensional raw gauge, and $z_M$ gives nullity $2+1+2+1=6$:
\begin{equation}
 \ker J_B^{\rm in}=\Rr^{\rm in}\oplus\Gg^{\rm in}
 \oplus\operatorname{span}\{z_M\}.
\end{equation}

The derivative $DX_M$ vanishes on the first three subspaces, while under the normalization used above
\begin{equation}
 |DX_M[z_M(\omega)]|^2=A_M^0\omega^2>0.
\end{equation}
Thus the joint kernel removes $z_M$ and only $z_M$. Since the quotient defect is one-dimensional, any additional linear observation capable of eliminating it must contribute at least one scalar row; the single row provided by $X_M$ attains this lower bound.

\subsection{Transverse Kernel of the Symmetric W Curvature}

Let $w_{ij}$ denote the transverse displacement of a flat reference configuration and set $p_{ij}=(-1)^{i+j}$. The linearized variation of a B normal consists of diagonal first differences, while $b_W$ takes their W-centered symmetric difference. The resulting stencil therefore forms a discrete Hessian on a diagonal graph that preserves raw parity. Introduce diagonal coordinates $\xi=i+j$ and $\eta=i-j$ separately on the even and odd sublattices. The pure second-difference equations and the mixed equation require each parity branch to be affine:
\begin{equation}
 w^{\pm}(\xi,\eta)=a_\pm+b_\pm\xi+c_\pm\eta.
\end{equation}
Rewriting the two parity-affine branches in $(i,j)$ coordinates gives the equivalent six-dimensional space
\begin{equation}
 \operatorname{span}\{1,i,j,p,ip,jp\}.
\end{equation}
The modes $1,i,j$ are a transverse translation and two infinitesimal rotations; $p$ leaves all edge midpoints fixed; and the edge averages of $ip,jp$ are nonzero, so they are physical curvature blind modes.

The same result can be interpreted from the root multiplicity of the Fourier symbol. The symbol of $b_W$ has second-order zeros at both $\Gamma=(0,0)$ and $M=(\pi,\pi)$. The constant and first jets at $\Gamma$ correspond to $1,i,j$, whereas those at $M$ correspond to $p,ip,jp$. The alternating sum of the four B normals, $X_W$, is nonzero on the first jets at $M$, while the symmetric curvature already observes the stripe points $(\pi,0)$ and $(0,\pi)$. Therefore
\begin{equation}
 \ker(D b_W,D X_W)=\operatorname{span}\{1,i,j,p\},
\end{equation}
which is exactly the rigid-plus-gauge target of the flat bending sector.
\section{Objectivity and Compatibility Details}\label{app:objectivity}

\subsection{Finite Gauge Invariance and Rigid Covariance of the B/W Geometry}

For the raw gauge $g_{ij}=p_{ij}A$, each B diagonal difference connects vertices of the same color, so the gauge increments at its two endpoints are identical and cancel under subtraction. Equivalently, all edge midpoints remain unchanged under $q\mapsto q+g$. Hence $T_B,N_B,T_W,D^hN,a_B$, and $b_W$ are all exactly gauge invariant. Under a superposed rigid motion $q\mapsto Qq+c$,
\begin{equation}
 T_B\mapsto QT_B,\quad N_B\mapsto QN_B,\quad
 T_W\mapsto QT_W,\quad D^hN\mapsto QD^hN,
\end{equation}
so $a_B$ and $b_W$ remain unchanged.

\subsection{Reference-Relative Objectivity of \texorpdfstring{$X_M$}{XM}}

The polar-normalized frame $\mathcal E_B=T_Ba_B^{-1/2}$ transforms as $\mathcal E_B\mapsto Q\mathcal E_B$ under a rigid rotation. The current/reference relative rotation
\begin{equation}
 R_B(q;q_0)=\mathcal E_B(q)\mathcal E_B(q_0)^T
\end{equation}
therefore becomes $QR_B$ when $Q$ is superposed on the current configuration only. In the neighboring-cell connection $C_B^\alpha=R_B^TR_{B+e_\alpha}$, the left factor $Q$ cancels. The Cayley axial coordinate and its projection onto the material/reference normal are therefore invariant, as are the plaquette mixed differences, the $M$-character sum, and ultimately $X_M$. If the current configuration is any rigid image of the reference, all $C_B^\alpha$ reduce to the reference-relative identity connection and $X_M=0$.

For an ordinary homogeneous affine membrane deformation, adjacent polar rotations are identical, so the connection differences vanish and $X_M$ does not measure the affine strain itself. The $M$-character normalization
\begin{equation}
 X_M=\frac{1}{4\sqrt{A_M^0}}
 \sum A_{ij}^{M,0}p_{ij}\mu_{ij}
\end{equation}
is chosen so that a unit alternating microrotation satisfies $|DX_M|^2=A_M^0\omega^2$. With this normalization, the fixed coefficient $\beta_M=8\mu$ matches a full-area trace-free unit shear channel. The coefficient is not tuned to a benchmark and is not claimed to be uniquely dictated by the continuum Gram law.

\subsection{Surface-Polar Transport of \texorpdfstring{$X_W^{\rm rel}$}{XW-rel}}

The raw alternating normal vector transforms as $X_W(q)\mapsto QX_W(q)$. With
\begin{equation}\label{eq:w-polar-app}
 \begin{aligned}
 n_W(q)&=\frac{t_1^W(q)\times t_2^W(q)}
 {\norm{t_1^W(q)\times t_2^W(q)}},\\
 E_W(q)&=[\,t_1^W(q),\;t_2^W(q),\;n_W(q)\,],
 \qquad E_W^0=E_W(q_0),\\
 F_W(q,q_0)&=E_W(q)[E_W(q_0)]^{-1},\\
 R_W(q,q_0)&=F_W(q,q_0)
 [F_W(q,q_0)^T F_W(q,q_0)]^{-1/2}\in SO(3),
 \end{aligned}
\end{equation}
the rotation is the orientation-preserving right polar factor of the W-centered current/reference deformation map. On the admissible set $F_W\in GL^+(3)$, a superposed rigid rotation $Q$ gives
\begin{equation}
 F_W(Qq+c,q_0)=QF_W(q,q_0),
 \qquad R_W(Qq+c,q_0)=Q R_W(q,q_0).
\end{equation}
Therefore
\begin{align}
 X_W^{\rm rel}(Qq+c;q_0)
 &=QX_W(q)-QR_W(q,q_0)X_W(q_0)\\
 &=QX_W^{\rm rel}(q;q_0),
\end{align}
and its Euclidean norm is invariant. If the direct difference $X_W(q)-X_W(q_0)$ were used instead, the second term would not rotate with the current rigid motion and the quantity would not be objective for a curved reference. Conversely, transporting each of the four B normals independently would also remove the alternating component itself. Using a single W surface-polar rotation preserves precisely the normal-compatibility information that must remain observable.

\subsection{Determination of the Membrane and Curvature Completion Coefficients}\label{app:completion-coefficients}

The coefficients of the two compatibility energies are obtained by matching the prescribed discrete observations to the corresponding isotropic shell-energy channels; no benchmark fitting is used. On the membrane side, the unique missing $M$-point quotient channel is normalized to continuous trace-free shear. On the curvature side, the alternating B-normal component on a W patch is interpreted as an auxiliary-Q1 normal field and its metric-gradient energy is integrated exactly.

\paragraph{Coefficient $\beta_M$ for $X_M$.}
In local orthonormal coordinates of the B frame, consider the unit trace-free shear tensor
\begin{equation}
 S=e_1\otimes e_2+e_2\otimes e_1,
 \qquad \tr S=0,\qquad \tr(S^2)=2.
\end{equation}
Let $\omega$ be the amplitude of the missing $M$-point microrotation. With the area normalization in \cref{eq:xm}, the pure $M$-character channel satisfies
\begin{equation}
 |DX_M|^2=A_M^0\omega^2.
\end{equation}
The continuum Gram law does not assign a modulus to the skew microrotation itself, because the $z_M$ direction satisfies $Da_B[z_M]=0$. To set an a priori constitutive scale for the missing scalar channel without benchmark fitting, we adopt the normalization convention
\begin{equation}
 Z_M^{\rm cal}=2\omega S.
\end{equation}
Here $Z_M^{\rm cal}$ is a latent trace-free shear calibration channel associated with the amplitude $\omega$; it is not the actual B-metric variation of $z_M$. In the orthonormal reference frame, the plane-stress Koiter contraction \cref{eq:qa} gives
\begin{equation}
 Q_I(Z_M^{\rm cal})
 =\mu\tr[(Z_M^{\rm cal})^2]
 =\mu(4\omega^2)\tr(S^2)
 =8\mu\omega^2.
\end{equation}
Matching the added scalar observation to the same unit-area material shear channel requires
\begin{equation}
 \beta_M A_M^0\omega^2
 =A_M^0Q_I(Z_M^{\rm cal}).
\end{equation}
Canceling $A_M^0\omega^2$ yields
\begin{equation}
 \beta_M=8\mu=\frac{4E}{1+\nu}.
\end{equation}
Thus $\beta_M$ fixes the constitutive normalization of $X_M$ through the isotropic shear modulus and the discrete normalization in \cref{eq:xm}. This is an a priori constitutive extension convention paired with the chosen $X_M$ normalization; it is not a unique coefficient implied by the continuum Gram law, and it is independent of load, thickness, or benchmark fitting.

\paragraph{Coefficient $\beta_W^0$ for $X_W^{\mathrm{rel}}$.}
Let the parameter domain of a complete reference W patch be
\begin{equation}
 \Omega_W=[-h_1/2,h_1/2]\times[-h_2/2,h_2/2],
\end{equation}
with local metric $g=g_W^0$. The alternating combination of the four corner B normals is
\begin{equation}
 X_W=N_{NE}-N_{NW}+N_{SW}-N_{SE}.
\end{equation}
Within a Q1 interpolation, the pure bilinear normal component associated with this alternating corner mode can be written as
\begin{equation}
 n_M(x,y)=\frac{xy}{h_1h_2}X_W.
\end{equation}
Its alternating sum at the four corners is exactly $X_W$, and its parametric derivatives are
\begin{equation}
 \partial_1 n_M=\frac{y}{h_1h_2}X_W,
 \qquad
 \partial_2 n_M=\frac{x}{h_1h_2}X_W.
\end{equation}
Substituting into the normal-gradient quadratic form on the reference metric gives
\begin{equation}
 I_W=
 \int_{\Omega_W}\sqrt{\det g}\,
 g^{\alpha\beta}
 \partial_\alpha n_M\cdot\partial_\beta n_M\,dx\,dy.
\end{equation}
Because $\Omega_W$ is centered symmetrically in $x$ and $y$, the $g^{12}$ cross term proportional to $xy$ integrates to zero, while
\begin{equation}
 \int_{\Omega_W}\frac{y^2}{h_1^2h_2^2}\,dx\,dy
 =\frac{1}{12}\frac{h_2}{h_1},
 \qquad
 \int_{\Omega_W}\frac{x^2}{h_1^2h_2^2}\,dx\,dy
 =\frac{1}{12}\frac{h_1}{h_2}.
\end{equation}
Hence
\begin{equation}
 I_W=
 \frac{\sqrt{\det g_W^0}}{12}
 \left[
 (g_W^0)^{11}\frac{h_2}{h_1}
 +(g_W^0)^{22}\frac{h_1}{h_2}
 \right]\norm{X_W}^2
 =\gamma_W^0\norm{X_W}^2.
\end{equation}
The curvature-compatibility energy inherits the shear modulus $\mu$ from the Koiter bending sector. Thus the coefficient multiplying the squared term in \cref{eq:xw-energy} is
\begin{equation}
 \beta_W^0=\mu\gamma_W^0
 =\frac{\mu\sqrt{\det g_W^0}}{12}
 \left[
 (g_W^0)^{11}\frac{h_2}{h_1}
 +(g_W^0)^{22}\frac{h_1}{h_2}
 \right].
\end{equation}
For an orthogonal square grid, $g_W^0=I$ and $h_1=h_2$, so $\gamma_W^0=1/6$ and $\beta_W^0=\mu/6$. Finite rotation replaces $X_W$ by the objective $X_W^{\rm rel}$ while leaving the reference-patch metric and its integration coefficient unchanged.
\section{Aligned Generalized-Cylinder Approximation}\label{app:cylinder}

This appendix gives the construction underlying \cref{thm:cylinder}. Consider
\begin{equation}
 r(x,z)=c(x)+ze_z,\qquad
 c'(x)\cdot e_z=0,\qquad |c'(x)|=1,\qquad
 e_\theta(x)=c'(x),\quad n(x)=e_\theta(x)\times e_z,
\end{equation}
where $c\in C^4$ has bounded signed curvature and the sign convention is
\begin{equation}
 e_\theta'(x)=-\kappa(x)n(x),\qquad n'(x)=\kappa(x)e_\theta(x).
\end{equation}
The grid nodes are $(x_i,z_j)$, with the $x_i$ uniformly sampled in arc length. Up to a constant rigid translation in the generator direction, let
\begin{equation}
 v(x,z)=u(x)e_\theta(x)+w(x)n(x)+c_ze_z,\qquad
 u'(x)+\kappa(x)w(x)=0,
\end{equation}
where $u,w\in C^3$ and $c_z$ is constant. This is a row-independent infinitesimal isometry of the continuous first fundamental form.

On the row-independent branch, the discrete B-metric zero condition reduces to one scalar recurrence for each circumferential chord. Define
\begin{equation}
 d_i=c(x_{i+1})-c(x_i),\qquad
 \widehat d_i=d_i/|d_i|,\qquad
 \delta v_i=v_{i+1}-v_i.
\end{equation}
The linearized chord-length condition is
\begin{equation}\label{eq:disc-cylinder-rec}
 \widehat d_i\cdot\delta v_i^C=0.
\end{equation}
Starting from the center node with $v_0^C=I_hv_0$, project each sampled increment $\delta I_hv_i$ onto $\widehat d_i^\perp$:
\begin{equation}
 \delta v_i^C=\delta I_hv_i
 -\widehat d_i(\widehat d_i\cdot\delta I_hv_i).
\end{equation}
Integrating this recurrence in both circumferential directions yields a row-independent field $v_h^C$ on the whole patch. By construction, \cref{eq:disc-cylinder-rec} holds exactly on every chord, while the axial and mixed B-metric variations vanish automatically by row independence; hence $J_Bv_h^C=0$.

For this row-independent branch, the infinitesimal polar rotations of the B cells are identical along each generator row. The increment of the relative connection in the generator direction is therefore zero, while the circumferential connection increment is independent of the row index. The two terms in \cref{eq:mixed-moment} consequently satisfy, pointwise,
\[
 D(\theta^x_{i,j+1}-\theta^x_{ij})[v_h^C]=0,
 \qquad
 D(\theta^y_{i+1,j}-\theta^y_{ij})[v_h^C]=0.
\]
Thus the linearized mixed moment vanishes on every complete plaquette, the $M$-character sum requires no boundary-parity cancellation, and
\begin{equation}
 DX_M[v_h^C]=0,\qquad v_h^C\in K_h^C=\ker(J_B,DX_M).
\end{equation}
If the patch carries essential boundary data, its trace must be compatible with the constructed $v_h^C$; a free patch requires no additional boundary condition.

It remains to estimate the correction. The arc-length chord expansion gives
\begin{equation}
 \widehat d_i=e_\theta(x_{i+1/2})+O(h^2),\qquad
 \delta I_hv_i=h\partial_xv(x_{i+1/2})+O(h^3).
\end{equation}
The continuous isometry condition implies $e_\theta\cdot\partial_xv=0$, and therefore
\begin{equation}
 \widehat d_i\cdot\delta I_hv_i=O(h^3).
\end{equation}
Each correction step is $O(h^3)$; accumulation over $O(h^{-1})$ circumferential intervals gives an $O(h^2)$ nodal correction, while its first differences remain $O(h^3)$. Substitution into the trapezoidal discrete norms yields
\begin{equation}
 \norm{I_hv-v_h^C}_{0,h}+h|I_hv-v_h^C|_{1,h}\le Ch^2,
\end{equation}
where $C$ depends only on the relevant Sobolev/$C^k$ bounds of $c$ and $v$, the domain length, and mesh shape regularity, but not on $h$. The physical-midpoint norm is a bounded average of neighboring raw values, so it obeys the same-order bound and exactly annihilates the raw gauge. This completes the proof.
\section{Boundary Closure}\label{app:boundary}

\subsection{Moment Reconstruction}

At a boundary located at $x=0$, let the three available interior samples be taken at $x=h/2$, $3h/2$, and $5h/2$. Quadratic moment reconstruction gives
\begin{equation}
 f(0)=\frac{15}{8}f(h/2)-\frac54 f(3h/2)+\frac38 f(5h/2)+O(h^3),
\end{equation}
and
\begin{equation}
 f'(0)=\frac{-2f(h/2)+3f(3h/2)-f(5h/2)}{h}+O(h^2).
\end{equation}
The formulas are exact for the value and derivative moments of polynomials up to degree two. Along two-dimensional edges and corners, the closure of $T_W,N_B,D^hN$ uses tensor products of this one-dimensional rule. The dual quadrature fractions for interior, edge, and corner sites are $1,1/2,1/4$, respectively. The resulting boundary geometry and full-domain integral retain second-order local consistency without introducing mirror ghost data that would implicitly impose a mechanical Neumann condition. The $X_W^{\rm rel}$ energy acts on complete interior W patches according to the definition of its auxiliary-Q1 coefficient, while $b_W$ itself is extended over the full finite domain by the moment closure above. These two treatments correspond, respectively, to geometric reconstruction of the curvature field and to integration of the high-frequency compatibility energy.
\section{Numerical Implementation and Additional Results}\label{app:numerical}

\subsection{Nonlinear Solution Procedure and Reference Data}

Linear problems are solved using the consistent reference-state Hessian. For nonlinear problems, both residual and tangent are obtained by automatic differentiation of the same scalar potential in \cref{eq:potential}. The solver uses Newton corrections, an energy-aware line search, adaptive load increments, rollback, and consistent convergence tolerances. None of the three complete nonlinear benchmarks requires arc-length continuation. In the $10^{-4}$, $10^{-3}$, and $10^{-2}$ load ranges, or their equivalent small-load regimes, $u(\lambda)/\lambda$ converges to the reference tangent response.

The cantilever reference is the analytical constant-curvature path. The complete hemisphere and open-cylinder paths are reproducibly digitized from verified vector graphics in the literature and are explicitly identified in the text and captions as digitized references. Independent tabulated endpoints are used only as cross-checks; the digitized curves are not represented as exact tabulated data. Path errors use RMS and maximum absolute measures normalized by the reference path norm or the maximum meaningful displacement, avoiding divergent pointwise relative errors near zero load.

\subsection{Spectral Properties and Finite-Mesh Contribution of the Curvature Completion}

For a free sphere patch, the numerical nullities of the B-only, $B+X_M$, and full B/W observations are 51, 50, and 9, respectively; on the active $n=16$ M3 space they are 10, 9, and 0. Thus $X_M$ changes only one curved active nullity, whereas a full B/W observation strongly compresses the original soft space. The fine-mesh displacement shifts induced by $X_W^{\rm rel}$ are 0.0136\%, 0.0210\%, and 0.2983\% for the plate, Scordelis--Lo, and M3 problems, respectively, and decrease further with refinement. These data reinforce the conclusion that $X_M$ acts primarily on the target quotient defect, while the finite-mesh influence of $X_W^{\rm rel}$ on standard load-bearing responses is small and decays under refinement.

\end{document}